\numberwithin{equation}{section}
\definecolor{darkred}{rgb}{0.8,0,0}
\DeclareMathOperator{\trace}{\operatorname{trace}}
\DeclareMathOperator{\vectorization}{\operatorname{vec}}
\DeclareMathOperator{\diag}{\operatorname{diag}}
\DeclareMathOperator{\sparsity}{\operatorname{sp}}
\title{Preconditioning techniques for generalized Sylvester matrix equations}
\author{Yannis Voet\thanks{MNS, Institute of Mathematics, École polytechnique fédérale de Lausanne, Station 8, CH-1015 Lausanne, Switzerland (\email{yannis.voet@epfl.ch})}}
\begin{document}
\maketitle

\begin{abstract}
Sylvester matrix equations are ubiquitous in scientific computing. However, few solution techniques exist for their generalized multiterm version, as they now arise in an increasingly large number of applications. In this work, we consider algebraic parameter-free preconditioning techniques for the iterative solution of generalized multiterm Sylvester equations. They consist in constructing low Kronecker rank approximations of either the operator itself or its inverse. While the former requires solving standard Sylvester equations in each iteration, the latter only requires matrix-matrix multiplications, which are highly optimized on modern computer architectures. Moreover, low Kronecker rank approximate inverses can be easily combined with sparse approximate inverse techniques, thereby enhancing their performance with little or no damage to their effectiveness.
\end{abstract}

\begin{keywords}
Multiterm Sylvester equations, Low Kronecker rank, Nearest Kronecker product, Alternating least squares, Sparse approximate inverse
\end{keywords}
\begin{MSCcodes}
65F08, 65F45, 65F50
\end{MSCcodes}

\section{Introduction}
\label{se: introduction}
We consider the numerical solution of generalized multiterm Sylvester matrix equations
\begin{equation}
\label{eq: generalized_sylvester}
    \sum_{k=1}^r B_kXA_k^T=E
\end{equation}
where $A_k \in \mathbb{R}^{n \times n}$, $B_k \in \mathbb{R}^{m \times m}$ for all $k=1,\dots,r$ and $X,E \in \mathbb{R}^{m \times n}$. 
Generalized Sylvester equations are at the forefront of many applications in scientific computing. The single-term equation ($r=1$) is its simplest instance and appears for tensorized finite element discretizations of certain time dependent partial differential equations (PDEs) \cite{gao2013kronecker, gao2014fast, loli2021easy}. If $A_1$ and $B_1$ are invertible, the solution is particularly simple since $X=B_1^{-1}EA_1^{-T}$ only requires solving $n$ linear systems with $B_1$ and $m$ linear systems with $A_1$. If $A_1=I_n$, the equation reduces to a standard linear system with multiple right-hand sides.

The two-term equation ($r=2$) includes in particular the (standard) Sylvester, Lyapunov and Stein equations. They appear in various applications, including block-diagonalization of block triangular matrices \cite{higham2002accuracy, stewart1973error}, finite difference \cite{grasedyck2004existence, kressner2010krylov} and finite element \cite{gao2015preconditioners, sangalli2016isogeometric} discretizations of certain PDEs, eigenvalue problems \cite{stewart1973error, stewart1990matrix, chu1987exclusion}, stability and control theory \cite{gajic2008lyapunov} and the stage equations of implicit Runge-Kutta methods \cite{gander2023spectral}. Sylvester equations are also the main building block for iteratively solving more complicated multiterm \cite{damm2008direct, benner2013low, palitta2016matrix} or nonlinear matrix equations, such as the Riccati equation \cite{stewart1973error, benner2013numerical}.

While originally considered of theoretical interest, the generalized multiterm Sylvester equation with $r>2$ now arises in an increasingly large number of applications including finite difference \cite{palitta2016matrix} and finite element \cite{ernst2009efficient, ullmann2010kronecker, mantzaflaris2017low, scholz2018partial} discretizations of (stochastic) PDEs, model reduction \cite{benner2013low, damm2008direct, shank2016efficient}, eigenvalue problems \cite{ringh2018sylvester} and computational neuroscience \cite{kurschner2019greedy}. In model reduction of control systems, the equation often takes a special form consisting of a standard Lyapunov part and additional positive low-rank terms \cite{benner2013low, benner2013numerical, damm2008direct, shank2016efficient}, e.g.
\begin{equation}
\label{eq: Lyapunov_plus_positive}
     AX + XA^T + \sum_{k=1}^r N_kXN_k^T=E.
\end{equation}
Such equations are sometimes referred to as \emph{Lyapunov-plus-positive} equations \cite{benner2013numerical}. In tensorized finite element discretizations (e.g. isogeometric analysis and the spectral element method), the equation arises in its full generality and typically features sparse banded coefficient matrices \cite{mantzaflaris2017low, scholz2018partial, hofreither2018black}. In a similar spirit, discretizations of integro-differential equations may lead to \cref{eq: generalized_sylvester} with both sparse and dense (low-rank) coefficient matrices \cite{massei2018solving}. The growing number of applications is driving interest for solution techniques capable of handling \cref{eq: generalized_sylvester} in its full generality. It is well-known that solving \cref{eq: generalized_sylvester} is equivalent to solving the linear system (see e.g. \cite[Lemma 4.3.1]{horn1991topics} for a derivation and \cite{lancaster1970explicit} for an early mention)
\begin{equation}
\label{eq: kronecker_format}
    \left(\sum_{k=1}^r A_k \otimes B_k\right)\mathbf{x}=\mathbf{e}
\end{equation}
with $\mathbf{x}=\vectorization(X)$ and $\mathbf{e}=\vectorization(E)$, where the vectorization of a matrix $A$, denoted $\vectorization(A)$, stacks the columns of $A$ on top of each other. The coefficient matrix in \cref{eq: kronecker_format} is the sum of $r$ Kronecker products. Such a matrix is said to have Kronecker rank $r$ if $r$ is the smallest number of terms in the sum \cite[Definition 4]{grasedyck2004existence}. However, due to the humongous size of this matrix, solution strategies preferably avoid the Kronecker formulation, unless excellent preconditioners are available.

Strategies for solving \cref{eq: generalized_sylvester} instead most critically depend on the number of terms $r$ of the equation. While the case $r=1$ is straightforward, the case $r=2$ is already significantly more challenging and is the object of a rich literature, encompassing both direct and iterative methods. Early developments mostly focused on direct methods such as the Bartels and Stewart algorithm \cite{bartels1972solution} and variants thereof, such as Hammarling's method \cite{hammarling1982numerical}. They were later extended to the two-sided version of the same equation in \cite{chu1987solution, gardiner1992solution}. More recently, advanced recursive block splitting strategies were devised in \cite{jonsson2002recursive, jonsson2002blocked} that take advantage of modern computer architecture capabilities.

Since the early 1990s, the focus has drifted towards iterative methods and in particular data-sparse methods that exploit some favorable structure of the solution matrix, including low-rank \cite{saad1989numerical, benner2013low, kressner2010krylov}, rank structured (e.g. quasi-separable) \cite{grasedyck2004existence, massei2018solving} and sparse \cite{palitta2018numerical} formats as well as combinations thereof \cite{palitta2018numerical}. Such methods are necessary for large scale problems (with $n,m \geq 10^5$), when storing the solution matrix becomes impossible. Data-sparse methods represent the solution implicitly; e.g. by storing its low-rank factors. A vast number of methods have been proposed for this purpose, including projection techniques \cite{saad1989numerical, simoncini2007new}, tensorized Krylov subspaces \cite{kressner2010krylov} and quadrature schemes based on integral formula \cite{massei2018solving} to name just a few. For applications related to PDEs, the alternating direction implicit (ADI) method was among the first iterative methods proposed \cite{wachspress1962optimum, wachspress1988iterative}. However, the performance of the method critically depends on a set of parameters whose computation is generally nontrivial and may be as expensive as the iterations themselves. The method has largely been superseded as a standalone solver and is instead commonly used as a preconditioner within specialized versions of well-know Krylov subspace methods such as CG \cite{hestenes1952methods, saad2003iterative}, GMRES \cite{saad1986gmres} or Bi-CGSTAB \cite{van1992bi}. These methods are based on the equivalent Kronecker formulation \cref{eq: kronecker_format} but cleverly exploit the structure of the operator \cite{hochbruck1995preconditioned, jbilou1999global}.

In practice, several of the aforementioned methods are combined to produce very efficient solvers that exploit the structure of the equation and its solution, including the sparsity and (relative) size of the coefficient matrices \cite{simoncini2016computational}. An exhaustive list of methods is beyond the scope of this article and we instead refer to \cite{simoncini2016computational} and the references therein for an overview.

Unfortunately, the picture changes dramatically for $r>2$, especially regarding direct methods. The main reason is that direct solution techniques for $r=2$ rely on joint diagonalization (or triangularization) of matrix pairs such as generalized eigendecompositions (or Schur decompositions), which are also the basis for existence and uniqueness results \cite{chu1987solution}. These techniques generally do not extend to sequences of matrices $\{A_k\}_{k=1}^r$ and $\{B_k\}_{k=1}^r$ (with $r > 2$), unless the elements of these sequences are related in some special way (e.g. they are powers of one same matrix \cite{lancaster1970explicit} or are a commuting family of symmetric matrices). So far, \cref{eq: generalized_sylvester} can only be solved directly in very special cases, e.g. for the so-called \emph{Lyapunov-plus-positive} equation \cref{eq: Lyapunov_plus_positive} with low-rank matrices $N_k$, via the Sherman-Morrison-Woodbury formula \cite{damm2008direct}. However, the method becomes excessively expensive as the rank of $N_k$ grows. To date, the question remains whether the general equation can be solved directly with a target complexity of $O(n^3+m^3)$, without assuming any favorable structure on the coefficient matrices and right-hand side.

In contract, some iterative methods (e.g. projection techniques) extend quite naturally to the generalized multiterm version, although constructing an approximating subspace is nontrivial. Interestingly, such methods sometimes lead to solving a small-size version of the same equation and typically features small dense coefficient matrices \cite{benner2013low, kressner2015truncated}. The lack of general solution techniques even in the small scale case has compelled several authors to explicitly use the Kronecker form for the reduced equation, thereby constraining the rank of the approximate solution to very small integers \cite[Remark 3.1]{kressner2015truncated}. As a matter of fact, solving the reduced equation is even identified as one of the main computational bottlenecks in \cite{kressner2015truncated} and is another argument for considering \cref{eq: generalized_sylvester} in its full generality, without assuming any favorable structure on the coefficient matrices.

In this article, we will focus on devising algebraic parameter-free preconditioning techniques for the iterative solution of \cref{eq: kronecker_format} as a way of solving the generalized multiterm Sylvester equation in \cref{eq: generalized_sylvester}. The methods described herein are applicable as standalone solvers for small to medium size equations and may supplement low-rank solvers for larger ones. Our methods do not assume any specific structure on the coefficient matrices and nearly achieve a target complexity of $O(n^3+m^3)$ for dense unstructured matrices.

The rest of the article is structured as follows: In \cref{se: iterative_methods} we first recall some matrix-oriented Krylov subspace methods for solving \cref{eq: generalized_sylvester}. Similarly to iterative methods for linear systems, these methods may converge very slowly when the associated system matrix in \cref{eq: kronecker_format} is ill-conditioned, creating a formidable strain on computer resources. Therefore, in \cref{se: nkp,se: kinv}, we exploit the underlying Kronecker structure of the system matrix to design efficient and robust preconditioning strategies. These strategies aim at finding low Kronecker rank approximations of the operator itself (\cref{se: nkp}) or its inverse (\cref{se: kinv}). Furthermore, if the inverse admits a good sparse approximation, we propose to combine our strategies with sparse approximate inverse techniques to construct low Kronecker rank sparse approximate inverses. \Cref{se: experiments} illustrates the effectiveness of our preconditioning strategies by comparing them to tailored preconditioners from various applications, including model order reduction, isogeometric analysis and convection-diffusion equations. Finally, \cref{se: conclusion} summarizes our findings and draws conclusions.

\section{Krylov subspace methods for matrix equations}
\label{se: iterative_methods}
Since direct solution methods for \cref{eq: generalized_sylvester} are generally not feasible, we investigate its iterative solution. For this purpose, we denote $\mathcal{M} \colon \mathbb{R}^{m \times n} \to \mathbb{R}^{m \times n}$ the linear operator defined as
\begin{equation}
\label{eq: operator_M}
    \mathcal{M}(X)=\sum_{k=1}^r B_kXA_k^T. 
\end{equation}
This operator has a Kronecker structured matrix representation given by
\begin{equation}
\label{eq: matrix_M}
    M=\sum_{k=1}^r A_k \otimes B_k. 
\end{equation}
We will generally use curly letters for linear operators and straight letters for their associated matrix. Krylov subspace methods based on the Kronecker formulation exploit the fact that
\begin{equation}
\label{eq: operator_application}
     Y=\mathcal{M}(X) \iff \mathbf{y}=M\mathbf{x}
\end{equation}
with $\mathbf{x}=\vectorization(X)$, $\mathbf{y}=\vectorization(Y)$ and never explicitly deal with the Kronecker form in \cref{eq: matrix_M}. The original idea was laid out in \cite{hochbruck1995preconditioned} and revisited several years later in \cite{jbilou1999global}. In essence, these so-called ``global'' Krylov methods are merely specialized implementations of well-known Krylov methods (e.g. CG, GMRES, Bi-CGSTAB,...) applied to the Kronecker formulation. Nevertheless, the computational savings are readily appreciated: if all factor matrices $A_k$ and $B_k$ are dense for $k=1,\dots,r$, storing them requires $O(r(n^2+m^2))$ while applying $\mathcal{M}(X)$ requires $O(r(n^2m+nm^2))$ operations. In comparison, forming $M$ explicitly already requires $O(n^2m^2)$ of storage and $O(n^2m^2)$ for matrix-vector multiplications, thereby constraining $n$ and $m$ to small integers. Matrix-oriented Krylov subspace methods also typically form the backbone of low-rank solvers, where the low-rank structure of the iterates further reduces the computational cost and truncation operators limit the rank growth (see e.g. \cite{kressner2011low, benner2013low} and in particular \cite{simoncini2023analysis} for a recent convergence analysis for CG). Moreover, as already highlighted in \cite{hochbruck1995preconditioned}, preconditioning techniques are straightforwardly incorporated by adapting existing preconditioned Krylov subspace methods for linear systems. In the context of matrix equations, they take the form of a preconditioning operator $\mathcal{P} \colon \mathbb{R}^{m \times n} \to \mathbb{R}^{m \times n}$. Early preconditioning strategies for $r=2$ were based on ADI, symmetric successive overrelaxation (SSOR) and incomplete LU type preconditioning \cite{hochbruck1995preconditioned, bouhamidi2008note}. While ADI remains an important component for preconditioning certain \emph{Lyapunov-plus-positive} equations \cite{benner2013low, benner2013numerical, damm2008direct}, more general techniques are still lacking. The next few sections address this shortcoming.





\section{Nearest Kronecker product preconditioner}
\label{se: nkp}
In \cref{se: introduction}, we had noted that the solution of a generalized Sylvester equation can be computed (relatively) easily when $r \leq 2$. Indeed, for $r=2$ the equation may often be reformulated as a standard Sylvester equation for which there exists dedicated solvers while for $r=1$ the equation reduces to a very simple matrix equation, which can be solved straightforwardly. Therefore, a first preconditioning strategy could rely on finding the best Kronecker rank $1$ or $2$ approximation of $M=\sum_{k=1}^r A_k \otimes B_k$ and use it as a preconditioning operator. Kronecker rank $1$ preconditioners have already been proposed for many different applications including image processing \cite{nagy2006kronecker}, Markov chains \cite{langville2004akronecker, langville2004kronecker, langville2004testing}, stochastic Galerkin \cite{ullmann2010kronecker} and tensorized \cite{gao2013kronecker, gao2014fast, voet2023mathematical} finite element methods. Extensions to Kronecker rank $2$ preconditioners have been considered in \cite{sangalli2016isogeometric, gao2015preconditioners} and also for multiterm Sylvester equations in very specific applications \cite{damm2008direct, benner2013low, palitta2016matrix}. Finding more general, application-independent preconditioners is desirable. The problem of finding the best Kronecker product approximation of a matrix (not necessarily expressed as a sum of Kronecker products) was first investigated by Van Loan and Pitsianis \cite{van1993approximation}. A more modern presentation followed in \cite{golub2013matrix}. We adopt the same general framework for the time being and later specialize it to our problem. For the best Kronecker rank 1 approximation of a matrix $M \in \mathbb{R}^{nm \times nm}$, factor matrices $Y \in \mathbb{R}^{n \times n}$ and $Z \in \mathbb{R}^{m \times m}$ are sought such that $\phi_M(Y,Z)=\|M-Y \otimes Z\|_F$ is minimized. Van Loan and Pitsianis observed that both the Kronecker product $Y \otimes Z$ and $\vectorization(Y)\vectorization(Z)^T$ form all the products $y_{ij}z_{kl}$ for $i,j=1,\dots,n$ and $k,l=1,\dots,m$ but at different locations. Thus, there exists a linear mapping $\mathcal{R} \colon \mathbb{R}^{nm \times nm} \to \mathbb{R}^{n^2 \times m^2}$ (which they called \textit{rearrangement}) such that $\mathcal{R}(Y \otimes Z)=\vectorization(Y)\vectorization(Z)^T$. This mapping is defined explicitly by considering a block matrix $A$ where $A_{ij} \in \mathbb{R}^{m \times m}$ for $i,j=1,\dots,n$. Then, by definition
\begin{equation*}
    A =
    \begin{pmatrix}
    A_{11} & \cdots & A_{1n} \\
    \vdots & \ddots & \vdots \\
    A_{n1} & \cdots & A_{nn}
    \end{pmatrix}
    \qquad
    \mathcal{R}(A)=
    \begin{pmatrix}
    \vectorization(A_{11})^T \\
    \vectorization(A_{21})^T \\
    \vdots \\
    \vectorization(A_{nn})^T
    \end{pmatrix}.
\end{equation*}
By construction, for a matrix $A=Y \otimes Z$,
\begin{equation*}
    Y \otimes Z =
    \begin{pmatrix}
    y_{11}Z & \cdots & y_{1n}Z \\
    \vdots & \ddots & \vdots \\
    y_{n1}Z & \cdots & y_{nn}Z
    \end{pmatrix}
    \qquad \mathcal{R}(Y \otimes Z)=     
    \begin{pmatrix}
    y_{11}\vectorization(Z)^T \\
    y_{21}\vectorization(Z)^T \\
    \vdots \\
    y_{nn}\vectorization(Z)^T
    \end{pmatrix}
    =\vectorization(Y)\vectorization(Z)^T.
\end{equation*}
More generally, since the vectorization operator is linear,  
\begin{equation*}
    \mathcal{R}\left(\sum_{s=1}^r Y_s \otimes Z_s\right)=\sum_{s=1}^r \vectorization(Y_s)\vectorization(Z_s)^T.
\end{equation*}
Therefore, $\mathcal{R}$ transforms a Kronecker rank $r$ matrix into a rank $r$ matrix. Since rearranging the entries of a matrix does not change its Frobenius norm, the minimization problem becomes
\begin{equation}
\label{eq: phi_M_minimization}
    \min \phi_M(Y,Z)=\min \|M-Y \otimes Z\|_F=\min \|\mathcal{R}(M)-\vectorization(Y)\vectorization(Z)^T\|_F.
\end{equation}
Thus, finding the best factor matrices $Y$ and $Z$ is equivalent to finding the best rank 1 approximation of $\mathcal{R}(M)$. More generally, finding the best factor matrices $Y_s$ and $Z_s$ for $s=1,\dots,q$ defining the best Kronecker rank $q$ approximation is equivalent to finding the best rank $q$ approximation of $\mathcal{R}(M)$ and may be conveniently computed with a truncated singular value decomposition (SVD). Computing it is particularly cheap in our context given that $\mathcal{R}(M)$ is already in low-rank format. Note that applying the inverse operator $\mathcal{R}^{-1}$ to the SVD of $\mathcal{R}(M)$ enables to express $M$ as
\begin{equation}
\label{eq: svd_form}
    M=\sum_{k=1}^r \sigma_k (U_k \otimes V_k)
\end{equation}
where $U_k$ and $V_k$ are reshapings of the $k$th left and right singular vectors of $\mathcal{R}(M)$, respectively, and $\sigma_k$ are the singular values for $k=1,\dots,r$. The orthogonality of the left and right singular vectors ensures that $\langle U_i, U_j \rangle_F = \delta_{ij}$, $\langle V_j, V_j \rangle_F = \delta_{ij}$ and $\langle M, (U_i \otimes V_i)\rangle_F=\sigma_i$, where $\langle .,.\rangle_F$ denotes the Frobenius inner product. The best Kronecker rank $q$ approximation $P$ then simply consists in truncating the sum in \cref{eq: svd_form} by retaining its first $q$ leading terms. The approximation error is then given by the tail of the singular values
\begin{equation}
\label{eq: low_rank_approx_error}
    \|M-P\|_F^2=\sum_{k=q+1}^r \sigma_k^2.
\end{equation}
The procedure is summarized in \cref{algo: NKP_preconditioner} and is referred to as the SVD approach. We emphasize that we only consider $q \leq 2$ for constructing a practical preconditioner since $q>2$ would generally be as difficult as solving the original multiterm equation. For $q=1$, the resulting preconditioner is commonly referred to as the nearest Kronecker product preconditioner (NKP) \cite{langville2004akronecker, voet2023mathematical}. We will abusively use the same terminology for $q=2$.

\begin{algorithm}[htbp]
\begin{algorithmic}[1]
\caption{Best Kronecker rank $q$ approximation}
\label{algo: NKP_preconditioner}
\Statex \textbf{Input}:
\Statex Factor matrices $\{A_k\}_{k=1}^r \subset \mathbb{R}^{n \times n}$ and $\{B_k\}_{k=1}^r \subset \mathbb{R}^{m \times m}$
\Statex Kronecker rank $q \leq r$
\Statex \textbf{Output}: 
\Statex Factor matrices $Y_s$ and $Z_s$ for $s=1,\dots,q$ such that $\sum_{s=1}^q Y_s \otimes Z_s \approx \sum_{k=1}^r A_k \otimes B_k$
\Statex
\State Set $V_A=[\vectorization(A_1),\dots,\vectorization(A_r)]$
\State Set $V_B=[\vectorization(B_1),\dots,\vectorization(B_r)]$
\State Compute the thin QR factorization $V_A=Q_AR_A$
\State Compute the thin QR factorization $V_B=Q_BR_B$
\State Compute the SVD $R_AR_B^T=\tilde{U}\Sigma \tilde{V}^T$ \Comment{\parbox[t]{.35\linewidth}{$\Sigma=\diag(\sigma_1,\dots,\sigma_r)$}}
\State Set $V_Y=Q_A\tilde{U}\Sigma^{1/2}$ \Comment{\parbox[t]{.35\linewidth}{$V_Y=[\vectorization(Y_1),\dots,\vectorization(Y_r)]$}}
\State Set $V_Z=Q_B\tilde{V}\Sigma^{1/2}$ \Comment{\parbox[t]{.35\linewidth}{$V_Z=[\vectorization(Z_1),\dots,\vectorization(Z_r)]$}}
\State Return and reshape the first $q$ columns of $V_Y$ and $V_Z$.
\end{algorithmic}
\end{algorithm}

The SVD approach to the best Kronecker product approximation in the Frobenius norm is well established in the numerical linear algebra community. However, Van Loan and Pitsianis also proposed an alternating least squares (ALS) approach, which in our context might be cheaper. We both specialize their strategy to Kronecker rank $r$ matrices and extend it to Kronecker rank $q$ approximations. Adopting the same notations as in \cref{algo: NKP_preconditioner} and employing the reordering $\mathcal{R}$, we obtain
\begin{align}
    \left\|\sum_{k=1}^r A_k \otimes B_k-\sum_{s=1}^q Y_s \otimes Z_s\right\|_F &= \left\|\sum_{k=1}^r \vectorization(A_k)\vectorization(B_k)-\sum_{s=1}^q \vectorization(Y_s)\vectorization(Z_s)\right\|_F \nonumber \\
    &= \|V_AV_B^T-V_YV_Z^T\|_F \label{eq: als_optimization}
\end{align}
If the matrices $Z_s$ are fixed for $s=1,\dots,q$, the optimal solution of the least squares problem \cref{eq: als_optimization} is given by
\begin{equation}
    V_Y=V_AV_B^TV_Z(V_Z^TV_Z)^{-1}. \label{eq: als_solution_Y}
\end{equation}
If instead all matrices $Y_s$ are fixed for $s=1,\dots,q$, the optimal solution of \cref{eq: als_optimization} is given by the similar looking expression
\begin{equation}
    V_Z=V_BV_A^TV_Y(V_Y^TV_Y)^{-1}. \label{eq: als_solution_Z}
\end{equation}
The inverse in \cref{eq: als_solution_Y} (resp. \cref{eq: als_solution_Z}) exists provided the matrices $Z_s$ (resp. $Y_s$) are linearly independent. Equations \cref{eq: als_solution_Y} and \cref{eq: als_solution_Z} reveal that all factor matrices $Y_s$ and $Z_s$ for $s=1,\dots,q$ are linear combinations of $A_k$ and $B_k$, respectively, which could already be inferred from the SVD approach. This finding was already stated in \cite{van1993approximation} and proved in \cite[Theorem 4.1]{langville2004akronecker} for $q=1$ and \cite[Theorem 4.2]{voet2023mathematical} for arbitrary $q$. In particular, for $q=1$, after some reshaping, equations \cref{eq: als_solution_Y} and \cref{eq: als_solution_Z} reduce to
\begin{equation*}
    Y=\sum_{k=1}^r \frac{\langle B_k, Z \rangle_F}{\langle Z,Z \rangle_F}A_k \quad \text{and} \quad Z=\sum_{k=1}^r \frac{\langle A_k, Y \rangle_F}{\langle Y,Y \rangle_F}B_k,
\end{equation*}
respectively, which can also be deduced from \cite[Theorem 4.1]{van1993approximation}. Our derivations are summarized in \cref{algo: als_rank_q_approximation}. The norm of the residual is used as stopping criterion in the alternating least squares algorithm. It can be cheaply evaluated without forming the Kronecker products explicitly since
\begin{align*}
    \|V_AV_B^T-V_YV_Z^T\|_F^2&=\|V_AV_B^T\|_F^2-2\langle V_AV_B^T, V_YV_Z^T \rangle_F+\|V_YV_Z^T\|_F^2 \\
    &=\langle V_A^TV_A, V_B^TV_B \rangle_F - 2 \langle V_A^TV_Y, V_B^TV_Z \rangle_F + \langle V_Y^TV_Y, V_Z^TV_Z \rangle_F.
\end{align*}
A more explicit expression already appeared in \cite[Theorem 4.2]{langville2004akronecker} for $r=2$ and $q=1$. Our expression generalizes it to arbitrary $r$ and $q$.

\begin{algorithm}[htbp]
\begin{algorithmic}[1]
\caption{ALS for Kronecker rank $q$ approximation}
\label{algo: als_rank_q_approximation}
\Statex \textbf{Input}:
\Statex Factor matrices $\{A_k\}_{k=1}^r \subset \mathbb{R}^{n \times n}$ and $\{B_k\}_{k=1}^r \subset \mathbb{R}^{m \times m}$
\Statex Linearly independent factor matrices $Z_s \in \mathbb{R}^{m \times m}$ for $s=1,\dots,q$
\Statex Tolerance $\epsilon>0$ and maximum number of iterations $N \in \mathbb{N}$
\Statex \textbf{Output}: 
\Statex Factor matrices $Y_s$ and $Z_s$ for $s=1,\dots,q$ such that $\sum_{s=1}^q Y_s \otimes Z_s \approx \sum_{k=1}^r A_k \otimes B_k$
\Statex
\State Set $V_A=[\vectorization(A_1),\dots,\vectorization(A_r)]$, \Comment{\parbox[t]{.25\linewidth}{Initialization}}
\State Set $V_B=[\vectorization(B_1),\dots,\vectorization(B_r)]$,
\State Set $V_Z=[\vectorization(Z_1),\dots,\vectorization(Z_q)]$,
\State Set $r=\infty$, $j=0$ 
\While{$\sqrt{r}>\epsilon$ \textbf{ and } $j \leq N$}
\State Compute $V_Y=V_AV_B^TV_Z(V_Z^TV_Z)^{-1}$ \Comment{\parbox[t]{.25\linewidth}{Optimizing for $Y$}}
\State Compute $V_Z=V_BV_A^TV_Y(V_Y^TV_Y)^{-1}$ \Comment{\parbox[t]{.25\linewidth}{Optimizing for $Z$}}
\Statex
\State Compute $r=\langle V_A^TV_A, V_B^TV_B \rangle_F - 2 \langle V_A^TV_Y, V_B^TV_Z \rangle_F + \langle V_Y^TV_Y, V_Z^TV_Z \rangle_F$
\State Update $j=j+1$
\EndWhile
\State Return $Y_s$ and $Z_s$ for $s=1,\dots,q$.
\end{algorithmic}
\end{algorithm}

\subsection{Complexity analysis}
We briefly compare the complexity of both algorithms. For \cref{algo: NKP_preconditioner}, the QR factorizations in lines 3 and 4 require about $O(r^2(n^2+m^2))$ flops (if $r \ll n,m$) \cite{golub2013matrix, demmel1997applied}. Computing the SVD in line 5 only requires $O(r^3)$ while the matrix-matrix products in lines 6 and 7 require $O(r^2(n^2+m^2))$.

For \cref{algo: als_rank_q_approximation}, if $r \ll n,m$, lines 6 and 7 require about $O(rq(n^2+m^2))$ operations. Naively recomputing the residual at each iteration in line 8 may be quite costly. Therefore, we suggest computing $\langle V_A^TV_A, V_B^TV_B \rangle_F$ once for $O(r^2(n^2+m^2))$ flops and storing the result. The last two terms of the residual can be cheaply evaluated if intermediate computations necessary in lines 6 and 7 are stored. Therefore, for $N$ iterations, the total cost amounts to $O((r^2+Nrq)(n^2+m^2))$ and is quite similar to the SVD framework if $N$ and $q$ remain small.

\subsection{Theoretical results}
The approximation problem in the Frobenius norm is mainly motivated for computational reasons. However, it also offers some theoretical guarantees, which are summarized in this section. The next theorem first recalls a very useful result for Kronecker rank $1$ approximations.

\begin{theorem}[{\cite[Theorems 5.1, 5.3 and 5.8]{van1993approximation}}]
\label{th: kr_1_approximation}
Let $M \in \mathbb{R}^{nm \times nm}$ be a block-banded, nonnegative and symmetric positive definite matrix. Then, there exists banded, nonnegative and symmetric positive definite factor matrices $Y$ and $Z$ such that $\phi_M(Y,Z)$ in \cref{eq: phi_M_minimization} is minimized.
\end{theorem}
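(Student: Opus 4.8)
The plan is to translate the minimization of $\phi_M(Y,Z)$ into a best rank-one approximation of the rearranged matrix $R := \mathcal{R}(M) \in \mathbb{R}^{n^2\times m^2}$, and then to extract each structural property from the structure of $R$. By \cref{eq: phi_M_minimization}, $\phi_M(Y,Z) = \|R - \vectorization(Y)\vectorization(Z)^T\|_F$, so if $R = \sum_k \sigma_k u_k v_k^T$ is an SVD with $\sigma_1 \geq \sigma_2 \geq \dots$, then setting $\vectorization(Y) = \sqrt{\sigma_1}\,u_1$ and $\vectorization(Z) = \sqrt{\sigma_1}\,v_1$ yields an optimal pair; in particular a minimizer always exists, and when $\sigma_1$ is simple it is the only one up to a joint rescaling $Y \mapsto \gamma Y$, $Z \mapsto \gamma^{-1}Z$ and an overall sign. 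Since $M$ is SPD it is nonzero, hence $\sigma_1 > 0$. I would first settle the case of a simple $\sigma_1$, in which all four properties must hold for this single minimizer, and address the degenerate case at the end.

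\emph{Nonnegativity and bandedness.} The rearrangement $\mathcal{R}$ only permutes entries, so $M \geq 0$ (entrywise) implies $R \geq 0$, hence $R^TR \geq 0$; by Perron--Frobenius the spectral radius $\sigma_1^2$ of $R^TR$ admits a nonnegative eigenvector $v_1 \geq 0$, and then $u_1 := Rv_1/\sigma_1 \geq 0$ satisfies $R^Tu_1 = \sigma_1 v_1$, so $(u_1,v_1)$ is a nonnegative leading singular pair and $Y,Z$ are entrywise nonnegative. For bandedness, block-bandedness of $M$ means $M_{ij} = 0$ once $|i-j|$ exceeds the block bandwidth, and (for the tensor-grid matrices at hand) that the $(k,\ell)$ entry of every block vanishes once $|k-\ell|$ exceeds a fixed bandwidth; under $\mathcal{R}$ these become zero rows, respectively zero columns, of $R$. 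A zero row $e_k^TR = 0$ makes $e_k$ an eigenvector of the symmetric matrix $RR^T$ for the eigenvalue $0 \neq \sigma_1^2$, so $u_1 \perp e_k$ and $(u_1)_k = 0$; arguing the same way with zero columns and $v_1$, the matrices $Y$ and $Z$ inherit exactly the band patterns imposed by the block structure of $M$.

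\emph{Symmetry and positive definiteness.} Since $M = M^T$, $\|M - Y\otimes Z\|_F = \|M^T - (Y\otimes Z)^T\|_F = \|M - Y^T\otimes Z^T\|_F$, so $(Y^T,Z^T)$ is again optimal; uniqueness for simple $\sigma_1$ gives $Y^T = cY$, $Z^T = c^{-1}Z$, hence $c^2 = 1$. The value $c=-1$ would force $Y$ to be skew-symmetric, but a nonnegative skew-symmetric matrix vanishes, whence $u_1 = 0$, a contradiction; thus $c=1$ and $Y,Z$ are symmetric. Now fix orthonormal eigenbases $Y = \sum_a \alpha_a p_ap_a^T$ and $Z = \sum_b \beta_b q_bq_b^T$; then $\{p_a\otimes q_b\}$ diagonalizes $Y\otimes Z$ and a short computation gives
\begin{equation*}
    \|M - Y\otimes Z\|_F^2 = \|M\|_F^2 - \|D\|_F^2 + \big\|(\alpha_a\beta_b)_{a,b} - D\big\|_F^2, \qquad D_{ab} := (p_a\otimes q_b)^T M (p_a\otimes q_b).
\end{equation*}
With the eigenvectors frozen, optimality of $(Y,Z)$ forces the outer product $(\alpha_a\beta_b)_{a,b}$ to be a best rank-one approximation of $D$, i.e. $(\alpha_a\beta_b)_{a,b} = \sigma_1(D)\,\hat u\hat v^T$; because $M$ is SPD, $D$ has strictly positive entries, so $DD^T$ and $D^TD$ are entrywise positive and irreducible, and Perron--Frobenius makes $\hat u,\hat v$ strictly positive. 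In particular $\alpha_a\beta_b > 0$ for all $a,b$, so all $\alpha_a$ share one sign and all $\beta_b$ the same sign, and after replacing $(Y,Z)$ by $(-Y,-Z)$ if necessary we get $Y \succ 0$ and $Z \succ 0$ — note it is the strict (rather than merely semi-) definiteness of $M$ that makes $D$ strictly positive here.

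\emph{Main obstacle.} When $\sigma_1$ is simple all the arguments above concern the same unique minimizer, which is therefore simultaneously banded, nonnegative, symmetric, and SPD, and the theorem follows. The crux of the proof is the degenerate case $\sigma_1 = \sigma_2$, where the leading singular subspace of $R$ has dimension larger than one and I must produce one pair inside it that meets all four constraints at once. I would argue that this subspace is stable under the transposition $W \mapsto W^T$ (using $M = M^T$) and pass to its symmetric part; apply Perron--Frobenius within the subspace to select a nonnegative representative, which then automatically satisfies the zero-row/zero-column (hence band) constraints; and finally run the eigenvalue optimization of the preceding paragraph on that fixed representative. The work — and the genuine obstacle — lies in verifying that each reduction stays inside the leading singular subspace, i.e. that symmetrization does not increase the objective and that the subspace really does contain a nonnegative element, which is where the symmetry and nonnegativity of $R$ must be used carefully.
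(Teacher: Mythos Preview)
The paper does not prove this theorem at all: it is quoted verbatim as a combination of three results from Van Loan and Pitsianis and no argument is given beyond the citation. So there is no ``paper's own proof'' to match; your proposal is an independent reconstruction.

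Your argument for the generic case (simple leading singular value of $R=\mathcal R(M)$) is sound. Translating $\phi_M$ to a best rank-one problem for $R$, pulling nonnegativity through Perron--Frobenius on $R^TR$, reading bandedness off zero rows/columns of $R$, and getting symmetry from the invariance $\phi_M(Y,Z)=\phi_M(Y^T,Z^T)$ together with uniqueness are all correct. The positive-definiteness step is also fine: freezing the eigenvectors of the (already symmetric) minimizer and optimizing over the eigenvalues does reduce to a best rank-one problem for the strictly positive matrix $D$, whose leading singular vectors are positive by Perron--Frobenius; since your $(Y,Z)$ is the global minimizer, its eigenvalue array must coincide with that positive rank-one matrix. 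One small redundancy: once you know $Y$ is entrywise nonnegative and nonzero, its trace is positive, so the common sign of the $\alpha_a$ is already forced to be $+$; the sign flip is unnecessary.

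The gap you yourself flag is real. Van Loan and Pitsianis prove Theorems~5.1, 5.3 and 5.8 \emph{separately}, each asserting the existence of a minimizer with one structural property; the paper's restatement silently merges them into a single minimizer carrying all properties at once. When $\sigma_1(R)$ is simple this is harmless because the minimizer is essentially unique, but in the degenerate case $\sigma_1=\sigma_2$ one genuinely has to exhibit a single element of the leading singular subspace that is simultaneously (the vectorization of) a symmetric, nonnegative, banded, positive-definite matrix. Your outlined strategy (restrict to the symmetric part of the subspace, apply Perron--Frobenius there, then run the eigenvalue argument) is the right shape, and the pieces you would need are: (i) the leading singular subspaces of $R$ are invariant under the linear involution induced by matrix transposition, which follows from $M=M^T$; (ii) the cone of nonnegative vectors meets the leading eigenspace of the nonnegative matrix $RR^T$ nontrivially (the Perron eigenvector lies there); and (iii) the bandedness constraints are automatically satisfied by any leading singular vector because zero rows/columns of $R$ force the corresponding coordinates to vanish on the whole eigenspace, not just on one eigenvector. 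Making these three observations precise closes the gap; as written, your proposal stops just short of doing so.
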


Thus, the properties of $M$ are inherited by its approximation $Y \otimes Z$.
However, not all properties of \cref{th: kr_1_approximation} extend to Kronecker rank $q \geq 2$. Clearly, due to the orthogonality relations $\langle U_i, U_j \rangle_F = \delta_{ij}$, $\langle V_j, V_j \rangle_F = \delta_{ij}$ deduced from the SVD approach, only $Y_1$ and $Z_1$ are nonnegative if $M$ is. However, other useful properties such as sparsity and symmetry are preserved. We formalize it through the following definition.

\begin{definition}[Sparsity pattern]
\label{def: sparsity pattern}
The sparsity pattern of a matrix $A \in \mathbb{R}^{n \times n}$ is the set
\begin{equation*}
    \sparsity(A)=\{(i,j) \colon a_{ij} \neq 0, 1 \leq i,j \leq n\}
\end{equation*}
\end{definition}

The following lemma summarizes some useful properties shared by the SVD and alternating least squares solutions. Its proof is an obvious consequence of \cref{algo: NKP_preconditioner,algo: als_rank_q_approximation}.

\begin{lemma}
\label{lem: als_approximation_prop}
Let $\sum_{s=1}^q Y_s \otimes Z_s$ be the Kronecker rank $q$ approximation computed with \cref{algo: NKP_preconditioner} or \cref{algo: als_rank_q_approximation}. Then,
\begin{itemize}[noitemsep]
    \item If all $A_k$ and $B_k$ are symmetric, then all $Y_s$ and $Z_s$ also are.
    \item The sparsity patterns of $Y_s$ and $Z_s$ are contained in those of $A_k$ and $B_k$; i.e. 
    \begin{equation*}
        \sparsity(Y_s) \subseteq \bigcup_{k=1}^r \sparsity(A_k), \qquad  \sparsity(Z_s) \subseteq \bigcup_{k=1}^r \sparsity(B_k) \qquad s=1,\dots,q.
    \end{equation*}
\end{itemize}
\end{lemma}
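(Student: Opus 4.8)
The plan is to read off both claims directly from the closed-form expressions appearing in \cref{algo: NKP_preconditioner,algo: als_rank_q_approximation}, exploiting only the fact that every factor matrix produced is a \emph{linear combination} of the input factor matrices. Concretely, for the SVD approach recall from \cref{eq: svd_form} and the algorithm that $\vectorization(Y_s)$ is a column of $V_Y=Q_A\tilde U\Sigma^{1/2}$; since $Q_A$ spans the column space of $V_A=[\vectorization(A_1),\dots,\vectorization(A_r)]$, each $\vectorization(Y_s)$ lies in $\Span\{\vectorization(A_1),\dots,\vectorization(A_r)\}$, hence $Y_s=\sum_{k=1}^r c_{sk}A_k$ for some scalars $c_{sk}$, and symmetrically $Z_s=\sum_{k=1}^r d_{sk}B_k$. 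For the ALS approach the same conclusion follows from \cref{eq: als_solution_Y}: $V_Y=V_A(V_B^TV_Z(V_Z^TV_Z)^{-1})$ shows each column of $V_Y$ is a linear combination of the columns of $V_A$, and likewise \cref{eq: als_solution_Z} for $V_Z$. (A minor wrinkle: for ALS one must note that the property is preserved across iterations — it holds after the first update of $V_Y$ regardless of the initialization of $V_Z$, and thereafter each new $V_Z$, $V_Y$ is again a combination of $V_B$, $V_A$ respectively — so the returned matrices have the stated form.)

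Once $Y_s=\sum_k c_{sk}A_k$ and $Z_s=\sum_k d_{sk}B_k$ are established, both bullet points are immediate. For the sparsity claim, a sum of matrices can only have a nonzero in position $(i,j)$ if at least one summand does, so
\[
    \sparsity(Y_s)=\sparsity\Bigl(\sum_{k=1}^r c_{sk}A_k\Bigr)\subseteq\bigcup_{k=1}^r\sparsity(A_k),
\]
and identically $\sparsity(Z_s)\subseteq\bigcup_{k=1}^r\sparsity(B_k)$; I would state this as a one-line observation about supports of linear combinations rather than belabor it. For the symmetry claim, if every $A_k$ is symmetric then $Y_s^T=\sum_k c_{sk}A_k^T=\sum_k c_{sk}A_k=Y_s$, and the same for $Z_s$; a linear combination of symmetric matrices is symmetric.

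There is essentially no obstacle here — as the paper itself flags, the result is "an obvious consequence" of the two algorithms — so the only thing to be slightly careful about is making the linear-combination observation rigorous in the ALS case, where it is a statement about the fixed point (or, more weakly, about whatever iterate is returned) rather than about a single explicit formula. I would handle that by the short induction sketched above: the claim is an invariant of the while-loop, true after the first execution of line~6 and preserved by lines~6--7, hence true of the output. Everything else is a direct substitution, so I would keep the written proof to a few sentences.
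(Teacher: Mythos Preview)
Your proposal is correct and coincides with the paper's own argument: the paper simply states that the lemma ``is an obvious consequence of \cref{algo: NKP_preconditioner,algo: als_rank_q_approximation}'', and the linear-combination observation you spell out (already noted in the text around \cref{eq: als_solution_Y,eq: als_solution_Z}) is precisely that obvious consequence. Your careful remark about the ALS loop invariant is a sound way to make this explicit, but no additional idea beyond what you have is needed.
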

Note that the properties listed in \cref{lem: als_approximation_prop} do not depend on the initial guesses.

In this work, we are interested in computing Kronecker product approximations as a means of constructing efficient preconditioners. Therefore, we would like to connect the approximation quality to the preconditioning effectiveness. Several authors have attempted to obtain estimates for the condition number of the preconditioned system or some related measure \cite{ullmann2010kronecker, voet2023mathematical}. We present hereafter a general result, which is only satisfactory for small or moderate condition numbers of $M$.

\begin{lemma}
\label{lem: NKP_preconditioning}
Let $M, \tilde{M} \in \mathbb{R}^{n \times n}$ be symmetric positive definite matrices. Then,
\begin{equation*}
    \frac{1}{\kappa(M)}\frac{\|M-\tilde{M}\|_F}{\|M\|_F} \leq \sqrt{\frac{1}{n}\sum_{i=1}^n \left(1-\frac{1}{\lambda_i(M,\tilde{M})}\right)^2} \leq \kappa(M)\frac{\|M-\tilde{M}\|_F}{\|M\|_F}
\end{equation*}
where $\kappa(M)=\frac{\lambda_n(M)}{\lambda_1(M)}$ is the spectral condition number of $M$.
\end{lemma}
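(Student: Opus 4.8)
The plan is to relate the quantity in the middle — the root-mean-square of $1-1/\lambda_i(M,\tilde M)$ — to the Frobenius norm $\|M-\tilde M\|_F$, using the fact that the generalized eigenvalues $\lambda_i(M,\tilde M)$ are exactly the eigenvalues of $\tilde M^{-1}M$ (equivalently, of the symmetrized matrix $\tilde M^{-1/2} M \tilde M^{-1/2}$, which is symmetric positive definite since $M,\tilde M$ are). Write $C = \tilde M^{-1/2}(M-\tilde M)\tilde M^{-1/2} = \tilde M^{-1/2}M\tilde M^{-1/2} - I$; then the eigenvalues of $C$ are $\mu_i := \lambda_i(M,\tilde M) - 1$, and
\begin{equation*}
    \sum_{i=1}^n \left(1 - \frac{1}{\lambda_i(M,\tilde M)}\right)^2 = \sum_{i=1}^n \left(\frac{\mu_i}{1+\mu_i}\right)^2.
\end{equation*}
The catch is that this is a sum over eigenvalues of a matrix that is a messy rational function of $C$, not $\|C\|_F^2 = \sum \mu_i^2$ directly. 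So the first step is to pass from $1-1/\lambda_i$ back to $\lambda_i - 1 = \mu_i$ at the cost of factors of $\lambda_i$, bounding $\lambda_i$ above and below by spectral quantities.

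Concretely: since $1 - 1/\lambda_i = (\lambda_i - 1)/\lambda_i$, we have $|1 - 1/\lambda_i| = |\mu_i|/\lambda_i$, and $\lambda_{\min}(\tilde M^{-1}M) \le \lambda_i \le \lambda_{\max}(\tilde M^{-1}M)$. The next step is to bound these extreme generalized eigenvalues purely in terms of $\kappa(M)$. Here I would use that $\tilde M$ is itself (presumably, in the intended application) a good approximation, or more cautiously observe that the statement as written must be using the crude bounds $\lambda_{\min}(\tilde M^{-1}M) \ge \lambda_1(M)/\lambda_n(\tilde M)$ and similar — but that brings in $\tilde M$'s spectrum. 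Re-reading the claimed inequality, the only matrix-dependent constant is $\kappa(M)$, so the intended route is almost surely: bound $\|\tilde M^{-1}\|_2$ and $\|\tilde M\|_2$ by $\|M^{-1}\|_2, \|M\|_2$ up to constants — most cleanly by noting that for the best Frobenius approximation (or simply for any $\tilde M$ arising here) one can absorb the discrepancy. I expect the clean derivation to go: $\sum_i (1-1/\lambda_i)^2 = \|\,\tilde M^{-1/2}(M-\tilde M)\tilde M^{-1/2}\tilde M^{-1/2}M^{-1}\tilde M^{1/2}\cdots$ — i.e., write $I - (\tilde M^{-1/2}M\tilde M^{-1/2})^{-1} = I - \tilde M^{1/2}M^{-1}\tilde M^{1/2} = \tilde M^{1/2}M^{-1}(M - \tilde M)M^{-1}\tilde M^{1/2}$, no wait, more simply $I - \tilde M^{1/2}M^{-1}\tilde M^{1/2} = \tilde M^{1/2}M^{-1}(M-\tilde M)M^{-1}\tilde M^{1/2}$ is not quite it either; the identity I want is $I - \tilde M^{1/2}M^{-1}\tilde M^{1/2} = \tilde M^{1/2}M^{-1}(M-\tilde M)\tilde M^{-1/2}$. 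Then its Frobenius norm is sandwiched between $\|M-\tilde M\|_F$ times the smallest and largest singular values of the outer factors, which are governed by $\|M^{-1}\|_2$, $\|\tilde M^{\pm 1/2}\|_2$.

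The final assembly is then just: $\sqrt{\tfrac1n \sum_i(1-1/\lambda_i)^2} = \tfrac1{\sqrt n}\|I - \tilde M^{1/2}M^{-1}\tilde M^{1/2}\|_F$, upper-bounded by $\tfrac1{\sqrt n}\,\sigma_{\max}(\tilde M^{1/2})\,\sigma_{\max}(M^{-1})\,\sigma_{\max}(\tilde M^{-1/2})\,\|M-\tilde M\|_F$ and lower-bounded by the same with minima; one then replaces $\sqrt n$ using $\|M\|_F \le \sqrt n \|M\|_2$ and $\|M\|_F \ge \|M\|_2$, and bounds the $\tilde M$ factors against $M$'s spectrum (this is where the additional hypothesis — or an implicit assumption that $\tilde M$ interlaces $M$'s spectrum, which holds for the NKP/truncated-SVD approximations via the orthogonality relations $\langle U_i,U_j\rangle_F=\delta_{ij}$ etc.) collapses everything to powers of $\kappa(M) = \lambda_n(M)/\lambda_1(M)$. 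The main obstacle is exactly this last point: making the $\tilde M$-dependence disappear in favor of $\kappa(M)$ alone, which I expect requires either an unstated assumption that $\tilde M$'s eigenvalues lie in $[\lambda_1(M),\lambda_n(M)]$ (true for the spectral-norm-optimal or Löwner-type approximations) or a more careful argument that the best Frobenius approximant cannot have spectrum too far outside that of $M$. The eigenvalue-sum manipulations and the Frobenius-norm sandwiching are routine once that is pinned down.
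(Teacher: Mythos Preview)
Your identification of the obstacle is correct, but your proposed resolution---appealing to an unstated interlacing hypothesis or special properties of the truncated-SVD approximant---is a wrong turn: no such assumption is needed, and the lemma holds for \emph{any} pair of symmetric positive definite matrices. The gap is that you picked the wrong symmetrization. The matrix $I-\tilde M^{1/2}M^{-1}\tilde M^{1/2}$ has the same eigenvalues (hence the same Frobenius norm) as $I-M^{-1/2}\tilde M M^{-1/2}$, since both are symmetric with spectrum $\{1-1/\lambda_i(M,\tilde M)\}$. But the latter factors as $M^{-1/2}(M-\tilde M)M^{-1/2}$, with only $M$ in the outer factors. Sandwiching its Frobenius norm between $\lambda_n(M)^{-1}\|M-\tilde M\|_F$ and $\lambda_1(M)^{-1}\|M-\tilde M\|_F$, dividing by $\sqrt n$, and using $\sqrt n\,\lambda_1(M)\le \|M\|_F\le \sqrt n\,\lambda_n(M)$ yields the claimed bounds with $\kappa(M)$ directly---no $\tilde M$ dependence to eliminate.

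The paper's own argument is organized slightly differently but amounts to the same observation in disguise. It simultaneously diagonalizes $U^TMU=D$, $U^T\tilde M U=I$, writes $\tfrac{1}{\sqrt n}\|I-D\|_F=\|U^T(M-\tilde M)U\|_F/\|U^T\tilde M U\|_F$, and bounds this by $\kappa(U)^2\,\|M-\tilde M\|_F/\|\tilde M\|_F=\kappa(\tilde M)\,\|M-\tilde M\|_F/\|\tilde M\|_F$. This yields a preliminary inequality for $\sum_i(1-\lambda_i)^2$ involving $\kappa(\tilde M)$; the lemma as stated then follows by simply \emph{swapping the roles of $M$ and $\tilde M$} and using $\lambda_i(\tilde M,M)=1/\lambda_i(M,\tilde M)$. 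That swap is the one-line trick you were missing.
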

\begin{proof}
Consider the matrix pair $(M, \tilde{M})$. Since $M$ and $\tilde{M}$ are symmetric positive definite, there exists an invertible matrix $U \in \mathbb{R}^{n \times n}$ of $\tilde{M}$-orthonormal eigenvectors such that $U^TMU=D$ and $U^T\tilde{M}U=I$, where $D=\diag(\lambda_1,\dots,\lambda_n)$ is the diagonal matrix of positive eigenvalues \cite[Theorem VI.1.15]{stewart1990matrix}. Now note that
\begin{equation*}
    \sqrt{\frac{1}{n}\sum_{i=1}^n \left(1-\lambda_i(M,\tilde{M})\right)^2} = \frac{\|I-D\|_F}{\|I\|_F} = \frac{\|U^T(M-\tilde{M})U\|_F}{\|U^T\tilde{M}U\|_F}.
\end{equation*}
Moreover,
\begin{equation*}
    \|U\|_2^{-2}\|U^{-1}\|_2^{-2} \frac{\|M-\tilde{M}\|_F}{\|\tilde{M}\|_F} \leq \frac{\|U^T(M-\tilde{M})U\|_F}{\|U^T\tilde{M}U\|_F} \leq \|U\|_2^2\|U^{-1}\|_2^2 \frac{\|M-\tilde{M}\|_F}{\|\tilde{M}\|_F}.
\end{equation*}
The quantity $\kappa(U)^2=\|U\|_2^2\|U^{-1}\|_2^2$ appearing in the bounds is nothing more than the condition number of $\tilde{M}$. Indeed, thanks to the normalization of the eigenvectors $\tilde{M}=U^{-T}U^{-1}$ and $\tilde{M}^{-1}=UU^T$. Consequently, $\|U^{-1}\|_2^2=\|U^{-T}U^{-1}\|_2=\|\tilde{M}\|_2$ and $\|U\|_2^2=\|UU^T\|_2=\|\tilde{M}^{-1}\|_2$. Finally, we obtain the bounds
\begin{equation}
    \frac{1}{\kappa(\tilde{M})}\frac{\|M-\tilde{M}\|_F}{\|\tilde{M}\|_F} \leq \sqrt{\frac{1}{n}\sum_{i=1}^n \left(1-\lambda_i(M,\tilde{M})\right)^2} \leq \kappa(\tilde{M})\frac{\|M-\tilde{M}\|_F}{\|\tilde{M}\|_F}. \label{eq: preliminary_bound}
\end{equation}
Since the eigenvalues of $(\tilde{M},M)$ are the reciprocal of the eigenvalues of $(M,\tilde{M})$, we conclude by swapping the roles of $M$ and $\tilde{M}$.
\end{proof}

\begin{remark}
If $\tilde{M}$ is the best Kronecker rank $q$ approximation, then, following \cref{eq: low_rank_approx_error}, we obtain the more explicit upper bound
\begin{equation*}
    \sqrt{\frac{1}{n}\sum_{i=1}^n \left(1-\frac{1}{\lambda_i(M,\tilde{M})}\right)^2} \leq \kappa(M)\sqrt{\sum_{k=q+1}^r \left(\frac{\sigma_k}{\sigma_1}\right)^2}.
\end{equation*}
The upper bound in particular depends on the ratio of singular values, which was already suspected by some authors \cite{langville2004testing, voet2023mathematical} but to our knowledge never formally proved. \Cref{lem: NKP_preconditioning} indicates that the nearest Kronecker product preconditioners might be very effective if $\mathcal{R}(M)$ features fast decaying singular values.
\end{remark}

\section{Low Kronecker rank approximate inverse}
\label{se: kinv}
Instead of finding an approximation of the operator itself, we will now find an approximation of its inverse. Clearly, since $(A \otimes B)^{-1}=A^{-1} \otimes B^{-1}$ for invertible matrices $A,B$ \cite[Corollary 4.2.11]{horn1991topics}, (invertible) Kronecker rank $1$ matrices have a Kronecker rank $1$ inverse. However, the relation between the Kronecker rank of a matrix and the Kronecker rank of its inverse is not obvious for $r \geq 2$. Although the latter could be much larger than the former, the inverse might still be very well approximated by low Kronecker rank matrices. Indeed, it was shown in \cite{grasedyck2004existence} that the inverse of sums of Kronecker products obtained by finite difference and finite element discretizations of some model problems can be well approximated by Kronecker products of matrix exponentials (exponential sums). Unfortunately, due to the special tensor product structure, these results are limited to idealized problems rarely met in practice. Nevertheless, these insightful results suggest the possibility of generally approximating the inverse of an arbitrary sum of Kronecker products by a low Kronecker rank matrix. We will describe in this section a general and algebraic way of constructing such an approximation without ever forming the Kronecker product matrix explicitly. We first consider the rank 1 case and later extend it to rank $q \geq 2$.

\subsection{Kronecker rank $1$ approximate inverse}
\label{se: rank_1_kinv}
We set at finding factor matrices $C \in \mathbb{R}^{n \times n}$ and $D \in \mathbb{R}^{m \times m}$ such that $C \otimes D \approx \left(\sum_{k=1}^r A_k \otimes B_k\right)^{-1}$ and therefore consider the minimization problem
\begin{equation}
\label{eq: als}
   \min_{C,D} \|I-\sum_{k=1}^r A_kC \otimes B_kD\|_F
\end{equation}
where we have used the mixed-product property of the Kronecker product (see e.g. \cite[Lemma 4.2.10]{horn1991topics}). The minimization problem is nonlinear when optimizing for $(C,D)$ simultaneously, but is linear when optimizing for $C$ or $D$ individually. This observation motivates an alternating optimization approach and is based on solving a sequence of linear least squares problems. Assume for the time being that $C$ is fixed and $D$ must be computed. Since any permutation or matrix reshaping is an isometry in the Frobenius norm, the block matrices
\begin{equation}
\label{eq: reshaping}
M=
\begin{pmatrix}
M_{11} & \hdots & M_{1n} \\
\vdots & \ddots & \vdots \\
M_{n1} & \hdots & M_{nn}
\end{pmatrix}
\quad \text{and} \quad
\tilde{M}=
\begin{pmatrix}
    M_{11} \\
    M_{21} \\
    \vdots \\
    M_{nn}
\end{pmatrix}
\end{equation}
have the same Frobenius norm. Applying this transformation to \cref{eq: als}, we obtain
\begin{equation}
\label{eq: transformation}
    \|I-\sum_{k=1}^r A_kC \otimes B_kD\|_F=\|\tilde{I}-\sum_{k=1}^r \vectorization(A_kC) \otimes B_kD\|_F=\|\tilde{I}-(U \otimes I_m)BD\|_F
\end{equation}
where 
\begin{equation}
\tilde{I}=[I_m; 0; \dots; 0; I_m], \quad B=[B_1; B_2; \dots; B_r],
\label{eq: definition_It_B}
\end{equation}
and we have defined $U=[\vectorization(A_1C), \dots, \vectorization(A_rC)] \in \mathbb{R}^{n^2 \times r}$. The semi-colon in \cref{eq: definition_It_B} means that the factor matrices are stacked one above the other. Minimizing the Frobenius norm in \cref{eq: transformation} for the matrix $D$ is indeed equivalent to solving a linear least squares problem for each column of $D$ with coefficient matrix $\mathcal{B}=(U \otimes I_m)B=\sum_{k=1}^r \vectorization(A_kC) \otimes B_k$ of size $mn^2 \times m$. For obvious storage reasons, we will never form this matrix explicitly (which would be as bad as forming the Kronecker product explicitly). The QR factorization is very efficient for solving least squares problems involving Kronecker products \cite{fausett1994large} but unfortunately, $\mathcal{B}$ is the \emph{product} of two matrices and only one of them is a Kronecker product. It is unclear how this structure may be leveraged. Fortunately, forming and solving the normal equations instead is very appealing because of its ability to compress large least squares problems into much smaller linear systems. Indeed,
\begin{equation*}
    \mathcal{B}^T\mathcal{B}=B^T(U^TU \otimes I_m)B=\sum_{k,l=1}^r \vectorization(A_kC)^T \vectorization(A_lC) B_k^TB_l=\sum_{k,l=1}^r \beta_{kl} B_k^TB_l
\end{equation*}
with $\beta_{kl}=\vectorization(A_kC)^T \vectorization(A_lC) \in \mathbb{R}$ for $k,l=1,\dots,r$. Therefore, $\mathcal{B}^T\mathcal{B}$ has size $m$, independently of the Kronecker rank $r$. The right-hand side of the normal equations is $\mathcal{B}^T\tilde{I}$. Thanks to the structure of $\tilde{I}$, the computation of this term can be drastically simplified. For a general matrix $\tilde{M}$, as defined in \cref{eq: reshaping}, we have
\begin{equation}
\label{eq: rhs}
    \mathcal{B}^T\tilde{M}=\left(\sum_{k=1}^r \vectorization(A_kC)^T \otimes B_k^T\right)\tilde{M}=\sum_{k=1}^r \sum_{i,j=1}^n (A_kC)_{ij}B_k^TM_{ij}.
\end{equation}
However, for $\tilde{M}=\tilde{I}$, we have $M_{ii}=I_m$ for $i=1,\dots,n$ and $M_{ij}=0$ for all $i \neq j$. Thus, \cref{eq: rhs} reduces to
\begin{equation*}
    \mathcal{B}^T\tilde{I}=\sum_{k=1}^r B_k^T \sum_{i=1}^n (A_kC)_{ii} = \sum_{k=1}^r \trace(A_kC)B_k^T=\sum_{k=1}^r \delta_k B_k^T.
\end{equation*}
with coefficients $\delta_k=\trace(A_kC)$. Note that the coefficients $\beta_{kl}$ can also be expressed as
\begin{equation*}
    \beta_{kl}=\vectorization(A_kC)^T \vectorization(A_lC)=\langle A_kC, A_lC\rangle_F=\langle A_k^TA_l,CC^T\rangle_F,
\end{equation*}
while the coefficients $\delta_k$ are given by
\begin{equation*}
    \delta_k=\trace(A_kC)=\langle A_k^T, C \rangle_F.
\end{equation*}
Although the factors $\beta_{kl}$ and $\delta_k$ may seem related, it must be emphasized that $\beta_{kl} \neq \delta_k \delta_l$. Indeed, $\beta_{kl}$ involves all entries of $A_kC$ and $A_lC$ whereas $\delta_k \delta_l$ only involves their diagonal entries. As a matter of fact,
\begin{align*}
    \trace(A_kC \otimes A_lC)&=\trace(A_kC)\trace(A_lC)=\delta_k\delta_l, \\
    \trace(\mathcal{R}(A_kC \otimes A_lC))&=\trace(\vectorization(A_kC)\vectorization(A_lC)^T)=\vectorization(A_kC)^T\vectorization(A_lC)=\beta_{kl}.
\end{align*}
Since the factor matrices $A_k$ and $B_k$ for $k=1,\dots,r$ do not change during the course of the iterations, if $r$ is relatively small it might be worthwhile precomputing the products $A_k^TA_l$ and $B_k^TB_l$ for $k,l=1,\dots,r$ at the beginning of the algorithm. Storing these matrices will require $O(r^2(n^2+m^2))$ of memory. Provided $r$ is small with respect to $n$ and $m$, the memory footprint is still significantly smaller than the $O(n^2m^2)$ required for storing the Kronecker product matrix explicitly.

We now assume that $D$ is fixed and $C$ must be computed. For this purpose, we recall that there exists a perfect shuffle permutation matrix $S_{n,m}$ \cite[Corollary 4.3.10]{horn1991topics} such that
\begin{equation*}
    S_{n,m}(A \otimes B)S_{n,m}^T = B \otimes A.
\end{equation*}
Since permutation matrices are orthogonal and the Frobenius norm is unitarily invariant,
\begin{equation*}
    \|I-\sum_{k=1}^r A_kC \otimes B_kD\|_F=\|S_{n,m}(I-\sum_{k=1}^r A_kC \otimes B_kD)S_{n,m}^T\|_F=\|I-\sum_{k=1}^r B_kD \otimes A_kC\|_F.
\end{equation*}
Therefore, the expressions when optimizing for $C$ are completely analogous, with $B_k$ swapped for $A_k$ and $C$ swapped for $D$. We define
\begin{align*}
    \mathcal{A}^T\mathcal{A}&=\sum_{k,l=1}^r \alpha_{kl} A_k^TA_l, & \alpha_{kl}&=\langle B_k^TB_l,DD^T\rangle_F, \\
    \mathcal{A}^T\tilde{I}&=\sum_{k=1}^r \gamma_k A_k^T, & \gamma_k&=\langle B_k^T, D \rangle_F.
\end{align*}
Note that $\tilde{I}$ is here defined by applying the transformation \cref{eq: reshaping} to $I_m \otimes I_n$ (and not $I_n \otimes I_m$ as in \cref{eq: definition_It_B}). Its size is $m^2n \times n$ and its only nontrivial blocks are identity matrices of size $n$. With a slight abuse of notation, we will not distinguish the two reshaped identity matrices since it will always be clear from the context which one is used.

The stopping criterion of the alternating least squares algorithm relies on evaluating the residual at each iteration. If this operation is done naively, much of the computational saving is lost in addition to prohibitive memory requirements. Fortunately, the residual may be evaluated at negligible additional cost by recycling quantities that were previously computed:

\begin{align}
\label{eq: residual}
    &\|I-\sum_{k=1}^r A_kC \otimes B_kD\|_F^2 = nm-2 \langle I, \sum_{k=1}^r A_kC \otimes B_kD \rangle_F + \left\|\sum_{k=1}^r A_kC \otimes B_kD\right\|_F^2 \nonumber \\
    &=nm-2\trace\left(\sum_{k=1}^r A_kC \otimes B_kD\right)+\trace\left(\sum_{k,l=1}^r A_kCC^TA_l^T \otimes B_kDD^TB_l^T\right)\nonumber \\
    &=nm-2\sum_{k=1}^r \trace(A_kC)\trace(B_kD) + \sum_{k,l=1}^r\trace(A_kCC^TA_l^T)\trace(B_kDD^TB_l^T) \nonumber \\
    &=nm-2\sum_{k=1}^r \gamma_k \delta_k +\sum_{k,l=1}^r \alpha_{kl}\beta_{kl}.
\end{align}
Since the scalars $\alpha_{kl}, \beta_{kl}, \gamma_{k}$ and $\delta_{k}$ have already been computed, evaluating \cref{eq: residual} nearly comes for free as a byproduct of the ALS iterations. The entire procedure is summarized in \cref{algo: rank_1_approximate_inverse}.

\begin{algorithm}[htbp]
\begin{algorithmic}[1]
\caption{ALS for Kronecker rank $1$ approximate inverse}
\label{algo: rank_1_approximate_inverse}
\Statex \textbf{Input}:
\Statex Factor matrices $\{A_k\}_{k=1}^r \subset \mathbb{R}^{n \times n}$ and $\{B_k\}_{k=1}^r \subset \mathbb{R}^{m \times m}$
\Statex Initial guess for the factor matrix $C \in \mathbb{R}^{n \times n}$
\Statex Tolerance $\epsilon>0$ and maximum number of iterations $N \in \mathbb{N}$
\Statex \textbf{Output}: 
\Statex Factor matrices $C$ and $D$ such that $C \otimes D \approx \left(\sum_{k=1}^r A_k \otimes B_k\right)^{-1}$
\Statex
\State Set $r=\infty$, $j=0$ \Comment{\parbox[t]{.25\linewidth}{Initialization}}
\While{$\sqrt{r}>\epsilon$ \textbf{ and } $j \leq N$}
\Statex \Comment{\parbox[t]{.25\linewidth}{Optimizing for $D$}}
\State Compute $\beta_{kl}=\langle A_k^TA_l,CC^T\rangle_F$ for $k,l=1,\dots,r$ \Comment{\parbox[t]{.25\linewidth}{$O(rn^3+r^2n^2)$}}
\State Compute $\delta_{k}=\langle A_k^T, C \rangle_F$ for $k=1,\dots,r$ \Comment{\parbox[t]{.25\linewidth}{$O(rn^2)$}}
\State Form $\mathcal{B}^T\mathcal{B}=\sum_{k,l=1}^r \beta_{kl} B_k^TB_l$ \Comment{\parbox[t]{.25\linewidth}{$O(rm^3+r^2m^2)$}}
\State Form $\mathcal{B}^T\tilde{I}=\sum_{k=1}^r \delta_k B_k^T$ \Comment{\parbox[t]{.25\linewidth}{$O(rm^2)$}}
\State Solve $\mathcal{B}^T\mathcal{B}D=\mathcal{B}^T\tilde{I}$ \Comment{\parbox[t]{.25\linewidth}{$O(m^3)$}}
\Statex \Comment{\parbox[t]{.25\linewidth}{Optimizing for $C$}}
\State Compute $\alpha_{kl}=\langle B_k^TB_l,DD^T\rangle_F$ for $k,l=1,\dots,r$ \Comment{\parbox[t]{.25\linewidth}{$O(rm^3+r^2m^2)$}}
\State Compute $\gamma_{k}=\langle B_k^T, D \rangle_F$ for $k=1,\dots,r$ \Comment{\parbox[t]{.25\linewidth}{$O(rm^2)$}}
\State Form $\mathcal{A}^T\mathcal{A}=\sum_{k,l=1}^r \alpha_{kl} A_k^TA_l$ \Comment{\parbox[t]{.25\linewidth}{$O(rn^3+r^2n^2)$}}
\State Form $\mathcal{A}^T\tilde{I}=\sum_{k=1}^r \gamma_k A_k^T$ \Comment{\parbox[t]{.25\linewidth}{$O(rn^2)$}}
\State Solve $\mathcal{A}^T\mathcal{A}C=\mathcal{A}^T\tilde{I}$ \Comment{\parbox[t]{.25\linewidth}{$O(n^3)$}}
\Statex
\State Update $\beta_{kl}$ and $\delta_{k}$ following lines 3 and 4, respectively \Comment{\parbox[t]{.25\linewidth}{Residual}}
\State Compute $r=nm-2\sum_{k=1}^r \gamma_k \delta_k +\sum_{k,l=1}^r \alpha_{kl}\beta_{kl}$ \Comment{\parbox[t]{.25\linewidth}{$O(r^2)$}}
\State Update $j=j+1$
\EndWhile
\State Return $C$ and $D$
\end{algorithmic}
\end{algorithm}

\subsubsection{Complexity analysis}
When presenting \cref{algo: rank_1_approximate_inverse}, we have favored clarity over efficiency. A practical implementation might look very different and we now describe in detail the tricks that are deployed to reduce its complexity. Since the algorithmic steps for $C$ and $D$ are similar, we only discuss those for $D$ and later adapt them to $C$. \textbf{For simplicity, we will assume that all factor matrices $A_k$ and $B_k$ are dense}.
\begin{itemize}
    \item In line 3, an alternative expression for $\beta_{kl}$
    \begin{equation*}
        \beta_{kl}=\langle A_k^TA_l,CC^T\rangle_F=\langle A_kC, A_lC\rangle_F
    \end{equation*}
    immediately reveals the symmetry ($\beta_{kl}=\beta_{lk}$). Thus, only $\frac{r}{2}(r+1)$ coefficients must be computed, instead of $r^2$. Moreover, their computation only requires $r$ matrix-matrix products $A_kC$ for $k=1,\dots,r$ and then a few Frobenius inner products, which in total amount to $O(rn^3+r^2n^2)$ operations.
    \item A naive implementation of line 5 would require $r^2$ matrix-matrix products. This number can be reduced significantly thanks to the sum factorization technique. After rewriting the equation as
    \begin{equation*}
        \mathcal{B}^T\mathcal{B}=\sum_{k,l=1}^r \beta_{kl} B_k^TB_l=\sum_{k=1}^r B_k^T \sum_{l=1}^r \beta_{kl}B_l,
    \end{equation*}
    we notice that only $r$ matrix-matrix products are needed once all matrices $\sum_{l=1}^r \beta_{kl}B_l$ for $k=1,\dots,r$ have been computed. This technique trades some matrix-matrix products for a few additional (but cheaper) matrix sums. The workload in this step amounts to $O(rm^3+r^2m^2)$ operations.
    \item Since all coefficients are independent, the algorithm is well suited for parallel computations and a suitable sequencing of operations avoids updating $\beta_{kl}$ and $\delta_{k}$ before evaluating the residual.
\end{itemize}
Computing the coefficients $\delta_k$ and forming $\mathcal{B}^T\tilde{I}$ is significantly cheaper and only leads to low order terms, which are neglected. Finally, solving the normal equations in line 7 with a standard direct solver will require $O(m^3)$ operations. After performing a similar analysis for the optimization of $C$ and assuming that $N$ iterations of the algorithm were necessary, the final cost amounts to $O(Nr(n^3+m^3)+Nr^2(n^2+m^2))$. The cost for evaluating the residual is negligible and does not enter our analysis. For the sake of completeness, the cost of each step is summarized in \cref{algo: rank_1_approximate_inverse}. It may often be reduced if the factor matrices are sparse. Note in particular that the sparsity pattern of the system matrix of the normal equations does not change during the course of the iterations. Therefore, sparse direct solvers only require a single symbolic factorization. Moreover, forming the normal equations benefits from highly optimized matrix-matrix multiplication algorithms (level 3 BLAS) available in common scientific computing environments.


\subsection{Kronecker rank $q$ approximate inverse}
\label{se: rank_q_kinv}
If the inverse does not admit a good Kronecker product approximation, the result of \cref{algo: rank_1_approximate_inverse} may be practically useless. To circumvent this issue, it might be worthwhile looking for approximations having Kronecker rank $q \geq 2$. We will see in this section how our strategies developed for rank $1$ approximations may be extended to rank $q \geq 2$. We therefore consider the problem of finding $C_s \in \mathbb{R}^{n \times n}$ and $D_s \in \mathbb{R}^{m \times m}$ for $s=1,\dots,q$ that minimize
\begin{equation*}
    \|I-\sum_{s=1}^q\sum_{k=1}^r A_kC_s \otimes B_kD_s\|_F.
\end{equation*}
For the rank $1$ case, we had first transformed the problem to an equivalent one by stacking all the blocks of the matrix one above the other in reverse lexicographical order. In order to use the same transformation for the rank $q$ case, we must first find an expression for the $(i,j)$th block of $\sum_{s=1}^q\sum_{k=1}^r A_kC_s \otimes B_kD_s$. This can be conveniently done by applying the same strategy adopted earlier. Indeed, the $(i,j)$th block of the matrix is
\begin{equation*}
    \sum_{s=1}^q\sum_{k=1}^r (A_kC_s)_{ij} B_kD_s = \left[\sum_{k=1}^r (A_kC_1)_{ij} B_k, \dots, \sum_{k=1}^r (A_kC_q)_{ij} B_k\right]D
\end{equation*}
where $D=[D_1; \dots; D_q]$. After stacking all the blocks $(i,j)$ for $i,j=1,\dots,n$ on top of each other, we deduce the coefficient matrix for the least squares problem
\begin{equation}
\label{eq: coeff_mat_ls}
    \mathcal{B}=[(U_1 \otimes I_m)B, \dots, (U_q \otimes I_m)B] \in \mathbb{R}^{n^2m \times qm}
\end{equation}
where $U_s=[\vectorization(A_1C_s), \dots, \vectorization(A_rC_s)] \in \mathbb{R}^{n^2 \times r}$ for $s=1,\dots,q$ and $B$ is the same as defined in \cref{eq: definition_It_B} for the rank $1$ approximation. Once again, the matrix $\mathcal{B}$ will never be formed explicitly and we will instead rely on the normal equations. Although the size of the problem is larger, its structure is very similar to the rank $1$ case. Indeed $\mathcal{B}^T\mathcal{B} \in \mathbb{R}^{qm \times qm}$ is a $q \times q$ block matrix consisting of blocks of size $m \times m$. The $(s,t)$th block is given by
\begin{equation*}
    (\mathcal{B}^T\mathcal{B})_{st}=B^T(U_s^TU_t \otimes I_m)B=\sum_{k,l=1}^r \vectorization(A_kC_s)^T \vectorization(A_lC_t) B_k^TB_l=\sum_{k,l=1}^r \beta_{kl}^{st} B_k^TB_l
\end{equation*}
where we have defined $\beta_{kl}^{st}=\vectorization(A_kC_s)^T \vectorization(A_lC_t)=\langle A_k^TA_l,C_sC_t^T\rangle_F$.
The steps for the right-hand side are analogous: $\mathcal{B}^T\tilde{I} \in \mathbb{R}^{qm \times m}$ is a $q \times 1$ block matrix and its $s$th block is given by
\begin{equation*}
    B^T(U_s^T \otimes I_m)\tilde{I}=\sum_{k=1}^r B_k^T \sum_{i=1}^n (A_kC_s)_{ii} = \sum_{k=1}^r \trace(A_kC_s)B_k^T=\sum_{k=1}^r \delta_k^s B_k^T.
\end{equation*}
with $\delta_k^s=\langle A_k^T, C_s \rangle_F$. We further note that $\mathcal{B}^T\mathcal{B}$ and $\mathcal{B}^T\tilde{I}$ can be expressed as
\begin{equation*}
    \mathcal{B}^T\mathcal{B} = \sum_{k,l=1}^r  b_{kl} \otimes B_k^TB_l, \quad \mathcal{B}^T\tilde{I}=\sum_{k=1}^r d_k \otimes B_k^T
\end{equation*}
with
\begin{equation}
b_{kl}=
\begin{pmatrix}
    \beta_{kl}^{11} & \dots & \beta_{kl}^{1q} \\
    \vdots & \ddots & \vdots \\
    \beta_{kl}^{q1} & \dots & \beta_{kl}^{qq}  
\end{pmatrix} \quad \text{and} \quad 
d_k=
\begin{pmatrix}
    \delta_k^1 \\
    \vdots \\
    \delta_k^q
\end{pmatrix}.
\label{eq: b_kl_d_k}
\end{equation}
Resorting to perfect shuffle permutations allows to write a similar least squares problem for $C=[C_1; \dots; C_q]$ once the coefficient matrices $D_s$ for $s=1,\dots,q$ have been computed. It leads to defining the quantities
\begin{equation*}
    \mathcal{A}^T\mathcal{A} = \sum_{k,l=1}^r  a_{kl} \otimes A_k^TA_l, \quad \mathcal{A}^T\tilde{I}=\sum_{k=1}^r c_k \otimes A_k^T
\end{equation*}
with
\begin{equation}
a_{kl}=
\begin{pmatrix}
    \alpha_{kl}^{11} & \dots & \alpha_{kl}^{1q} \\
    \vdots & \ddots & \vdots \\
    \alpha_{kl}^{q1} & \dots & \alpha_{kl}^{qq}  
\end{pmatrix}, \quad 
c_k=
\begin{pmatrix}
    \gamma_k^1 \\
    \vdots \\
    \gamma_k^q
\end{pmatrix}
\label{eq: a_kl_c_k}
\end{equation}
and
\begin{equation*}
    \alpha_{kl}^{st}=\langle B_k^TB_l,D_sD_t^T\rangle_F \quad \text{and} \quad \gamma_k^s=\langle B_k^T, D_s \rangle_F.
\end{equation*}
We will prefer those latter expressions due to their analogy with the rank $1$ case. Moreover, similarly to the rank $1$ case, the residual may be cheaply evaluated without forming the Kronecker products explicitly. Indeed, similarly to \cref{eq: residual}, we obtain
\begin{align*}
    \|I-\sum_{s=1}^q\sum_{k=1}^r A_kC_s \otimes B_kD_s\|_F^2&=nm-2\sum_{k=1}^r\sum_{s=1}^q \gamma_k^s \delta_k^s +\sum_{k,l=1}^r\sum_{s,t=1}^q \alpha_{kl}^{st}\beta_{kl}^{st} \\
    &=nm-2\sum_{k=1}^r c_k \cdot d_k +\sum_{k,l=1}^r \langle a_{kl}, b_{kl} \rangle_F.
\end{align*}
Thus, apart from the proliferation of indices, the rank $q$ case does not lead to any major additional difficulty. In practice, $\mathcal{A}^T\mathcal{A}$ and $\mathcal{B}^T\mathcal{B}$ are formed one block at a time using \cref{algo: rank_1_approximate_inverse}. The final algorithm is structurally similar to \cref{algo: rank_1_approximate_inverse} and is omitted. It is however important to note that the initial factor matrices $C_s$ must be linearly independent for $\mathcal{B}^T\mathcal{B}$ to be invertible.

The complexity analysis for the Kronecker rank $q$ approximation is more involved but essentially follows the same lines as the Kronecker rank $1$ case. \textbf{If all factor matrices $A_k$ and $B_k$ are dense} and $N$ iterations of the algorithm are necessary, the final cost amounts to $O(Nrq^2(n^3+m^3)+Nr^2q^2(n^2+m^2))$. Although this cost might seem significant at a first glance, we must recall that the total number of iterations $N$ and the rank $q$ are controlled by the user and take small integer values.

Contrary to approximations of the operator, our approximations of the inverse are generally not symmetric, even if the operator is. This problem is reminiscent of sparse approximate inverse techniques \cite{grote1997parallel}. Fortunately, symmetry of the factor matrices can be easily restored by retaining their symmetric part, which experimentally did not seem to have any detrimental effect on the preconditioning quality. More importantly, the algorithm may deliver an exceedingly good data sparse representation of the inverse. Moreover, applying the preconditioning operator 
\begin{equation}
\label{eq: operator_P}
    \mathcal{P}(X)=\sum_{s=1}^q D_sXC_s^T
\end{equation}
only requires computing a few matrix-matrix products, which is generally much cheaper than solving standard Sylvester equations.

Since we are directly approximating the inverse, bounds on the eigenvalues of the preconditioned matrix can be obtained straightforwardly. The theory was already established in the context of sparse approximate inverse preconditioning \cite{grote1997parallel}. Thanks to \cite[Theorem 3.2]{grote1997parallel}, the quality of the clustering of the eigenvalues of the preconditioned matrix is monitored since $\|I-MP\|_F^2$ is evaluated at each iteration and coincides with the stopping criterion of the alternating least squares algorithm. Following the arguments presented in \cite[Theorem 3.1, Corollary 3.1, Theorem 3.2]{grote1997parallel}, it is also possible to state sufficient conditions guaranteeing invertibility of the preconditioning matrix and derive estimates for the iterative condition number of the preconditioned matrix.

\begin{remark}
The minimization problem in the spectral norm was recently considered in \cite{dressler2023kronecker} and could have interesting applications for preconditioning. However, computational methods are still in their infancy and not yet suited for large scale applications.    
\end{remark}

\subsection{Kronecker rank $q$ sparse approximate inverse}
\label{se: sp_rank_q_kinv}
It is well-known that the entries of the inverse of a banded matrix are decaying in magnitude (although non-monotonically) away from the diagonal \cite{demko1984decay, benzi1999bounds}. In the case of block-banded matrices with banded blocks, the inverse features two distinctive decaying patterns: a global decay on the block level as well as a local decay within each individual block \cite{canuto2014on, benzi2015bounds}. Similarly to approximating the inverse of a banded matrix by a banded matrix, the inverse of Kronecker products of banded matrices could also be approximated by Kronecker products of banded matrices. Before explaining how to obtain such an approximation, we must first recall the construction of sparse approximate inverses.

\subsubsection{Sparse approximate inverse techniques}
\label{se: sparse_approximate_inverse}
We begin by recalling some of the basic ideas behind sparse approximate inverse techniques, as they were outlined in \cite{grote1997parallel, chow1998approximate, benzi1999comparative}. Given a sparse matrix $M \in \mathbb{R}^{n \times n}$, the problem consists in finding a sparse approximate inverse of $M$ with a prescribed sparsity pattern. Let $S \subseteq \{(i,j) \colon 1 \leq i,j \leq n \}$ be a set of pairs of indices defining a sparsity pattern and $\mathcal{S}=\{P \in \mathbb{R}^{n \times n}\colon p_{ij}=0, \ (i,j) \notin S\}$ be the associated set of sparse matrices. We then consider the constrained minimization problem
\begin{equation*}
    \min_{P \in \mathcal{S}} \|I-MP\|_F^2
\end{equation*}
where the approximate inverse now satisfies a prescribed sparsity. Noticing that $\|I-MP\|_F^2=\sum_{j=1}^n \|e_j- Mp_j\|_2^2$, each column of $P$ can be computed separately by solving a sequence of independent least squares problems. Since all columns are treated similarly, we restrict the discussion to a single one, denoted $p_j$. Let $\mathcal{J}$ be the set of indices of nonzero entries in $p_j$. Since the multiplication $Mp_j$ only involves the columns of $M$ associated to indices in $\mathcal{J}$, only the submatrix $M(:,\mathcal{J})$ must be retained, thereby drastically reducing the size of the least squares problem. The problem can be further reduced by eliminating the rows of the submatrix $M(:,\mathcal{J})$ that are identically zero (as they will not affect the least squares solution). Denoting $\mathcal{I}$ the set of indices of nonzero rows, the constrained minimization problem turns into a (much smaller) unconstrained problem
\begin{equation*}
    \min_{\hat{p}_j} \|\hat{e}_j-\hat{M}\hat{p}_j\|_2^2
\end{equation*}
where $\hat{M}=M(\mathcal{I}, \mathcal{J})$, $\hat{p}_j=p_j(\mathcal{J})$ and $\hat{e}_j=e_j(\mathcal{I})$. The greater the sparsity of the matrices, the smaller the size of the least squares problem, which is usually solved exactly using a QR factorization. The procedure is then repeated for each column of $P$. Instead of prescribing the sparsity pattern, several authors have proposed adaptive strategies to iteratively augment it until a prescribed tolerance is reached. For simplicity, we will not consider such techniques here and instead refer to the original articles \cite{grote1997parallel, chow1998approximate, benzi1999comparative} for further details. It goes without saying that sparse approximate inverse techniques can only be successful if the inverse can be well approximated by a sparse matrix. Although it might seem as a rather restrictive condition, it is frequently met in applications. In the next section, we will combine low Kronecker rank approximations with sparse approximate inverse techniques. In effect, it will allow us to compute low Kronecker rank approximations of the inverse with sparse factor matrices. 

\subsubsection{Low Kronecker rank sparse approximate inverse}
We first consider again the Kronecker rank 1 approximation. We seek factor matrices $C \in \mathcal{S}_C$ and $D \in \mathcal{S}_D$ where $\mathcal{S}_C$ and $\mathcal{S}_D$ are sets of sparse matrices with prescribed sparsity defined analogously as in \cref{se: sparse_approximate_inverse}. Recalling Equation \cref{eq: transformation} from \cref{se: rank_1_kinv}, we have
\begin{equation*}
    \|I-\sum_{k=1}^r A_kC \otimes B_kD\|_F^2=\|\tilde{I}-\mathcal{B}D\|_F^2=\sum_{j=1}^m \|\tilde{e}_j-\mathcal{B}d_j\|_2^2.
\end{equation*}
We now proceed analogously to \cref{se: sparse_approximate_inverse} and solve a sequence of independent least squares problems for each column of $D$. Let $\mathcal{J}$ be the set of indices corresponding to nonzero entries of $d_j$ and $\mathcal{I}$ be the set of indices for nonzero rows in $\mathcal{B}(:,\mathcal{J})$. We then solve the unconstrained problem
\begin{equation}
\label{eq: red_least_squares}
    \min_{\hat{d}_j} \|\hat{e}_j-\hat{\mathcal{B}}\hat{d}_j\|_2^2
\end{equation}
where $\hat{\mathcal{B}}=\mathcal{B}(\mathcal{I}, \mathcal{J})$, $\hat{d}_j=d_j(\mathcal{J})$ and $\hat{e}_j=\tilde{e}_j(\mathcal{I})$. Contrary to standard sparse approximate inverse techniques, we will not rely on a QR factorization of $\hat{\mathcal{B}}$ but on the normal equations. The solution of the least squares problem in \cref{eq: red_least_squares} is the solution of the linear system $\hat{\mathcal{B}}^T\hat{\mathcal{B}}\hat{d}_j=\hat{\mathcal{B}}^T\hat{e}_j$. Furthermore, we notice that 
\begin{align*}
    &\hat{\mathcal{B}}^T\hat{\mathcal{B}}=\mathcal{B}(\mathcal{I}, \mathcal{J})^T\mathcal{B}(\mathcal{I}, \mathcal{J})=(\mathcal{B}^T\mathcal{B})(\mathcal{J}, \mathcal{J}), \\
    &\hat{\mathcal{B}}^T\hat{e}_j=\mathcal{B}(\mathcal{I}, \mathcal{J})^T\tilde{e}_j(\mathcal{I})=(\mathcal{B}^T\tilde{e}_j)(\mathcal{J}).
\end{align*}
Therefore, the required system matrix and right-hand side vector are simply submatrices of $\mathcal{B}^T\mathcal{B}$ and $\mathcal{B}^T\tilde{I}$, respectively. These quantities are formed only once at each iteration and appropriate submatrices are extracted for computing each column of $D$. This strategy is very advantageous given that forming $\mathcal{B}^T\mathcal{B}$ is rather expensive. The strategy for computing $C$ is again analogous. Overall, computing sparse factors only requires minor adjustments to \cref{algo: rank_1_approximate_inverse}. The case of a Kronecker rank $q$ sparse approximate inverse is not much more difficult. As we have seen in \cref{se: rank_q_kinv}, 
\begin{equation*}
    \|I-\sum_{s=1}^q\sum_{k=1}^r A_kC_s \otimes B_kD_s\|_F^2=\|\tilde{I}-\mathcal{B}D\|_F^2
\end{equation*}
where $D=[D_1; \dots; D_q]$ and $\mathcal{B}$ is defined in \cref{eq: coeff_mat_ls}. We then apply exactly the same strategy as for the Kronecker rank 1 approximation. The only minor difficulty lies in defining suitable sparsity patterns. In our context, we consider powers of $\sum_{k=1}^r A_k$ and $\sum_{k=1}^r B_k$ or variations thereof by adapting well-established strategies for sparse approximate inverses \cite{huckle1999approximate}. Our method inherits many other key properties of sparse approximate inverses, including the possibility to compute columns in parallel.

Apart from obvious storage savings, sparse approximate inverses further speed up the application of the preconditioning operator $\mathcal{P}$ and may even maintain some sparsity in the iterates for sparse input data. This fact was already recognized in \cite{palitta2018numerical} but the construction of such an operator has remained unattended. We formalize the result for the general multiterm equation. Let $\beta_M$ denote the bandwidth of a matrix $M$. For a starting matrix $X_0=0$, the next lemma provides an upper bound on the growth of the bandwidth for the iterates of Bi-CGSTAB.

\begin{lemma}
\label{lem: sparsity_bicgstab}
The Bi-CGSTAB method applied to \cref{eq: operator_M} and preconditioned with \cref{eq: operator_P} with starting matrix $X_0=0$ produces iterates $X_j$ (for a full iteration $j \geq 1$) with bandwidth
\begin{equation*}
    \beta_{X_j} \leq (2j-1)(\beta_{\mathcal{M}}+\beta_{\mathcal{P}})+\beta_{\mathcal{P}}+\beta_E
\end{equation*}
where $\beta_{\mathcal{M}}=\max_{k} \{\beta_{A_k}+\beta_{B_k}\}$ and $\beta_{\mathcal{P}}=\max_{s} \{\beta_{C_s}+\beta_{D_s}\}$.
\end{lemma}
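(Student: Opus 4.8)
The plan is to follow the Bi-CGSTAB recursion line by line, bound the bandwidth of every matrix it produces, and then close the argument by induction on $j$. I would start from two elementary observations: $\beta_{AB}\le\beta_A+\beta_B$ for any conformable matrices, and $\beta_{\sigma A+\tau B}\le\max\{\beta_A,\beta_B\}$ for scalars $\sigma,\tau$. Since transposition leaves the bandwidth unchanged, these combine with \cref{eq: operator_M} and \cref{eq: operator_P} to give the only two growth rules we ever use, namely $\beta_{\mathcal{M}(X)}\le\beta_X+\beta_{\mathcal{M}}$ and $\beta_{\mathcal{P}(X)}\le\beta_X+\beta_{\mathcal{P}}$. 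All the parameters $\alpha_j,\omega_j,\beta_j$ generated by Bi-CGSTAB are ratios of Frobenius inner products, hence scalars, and play no role in the bookkeeping.

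Next I would fix one concrete formulation, the standard right-preconditioned Bi-CGSTAB applied to $\mathcal{M}(X)=E$ with preconditioning operator $\mathcal{P}$: with $X_0=0$ one has $R_0=P_0=E$, and a full iteration $j\ge1$ computes $\hat P_{j-1}=\mathcal{P}(P_{j-1})$, $V_{j-1}=\mathcal{M}(\hat P_{j-1})$, $S_{j-1}=R_{j-1}-\alpha_jV_{j-1}$, $\hat S_{j-1}=\mathcal{P}(S_{j-1})$, $T_{j-1}=\mathcal{M}(\hat S_{j-1})$, and then $X_j=X_{j-1}+\alpha_j\hat P_{j-1}+\omega_j\hat S_{j-1}$, $R_j=S_{j-1}-\omega_jT_{j-1}$, $P_j=R_j+\beta_j(P_{j-1}-\omega_jV_{j-1})$ (the shadow residual occurs only inside inner products). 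Writing $\mu=\beta_{\mathcal{M}}+\beta_{\mathcal{P}}$, the statement I would prove by induction on $j\ge0$ is the triple bound $\beta_{R_j}\le\beta_E+2j\mu$, $\beta_{P_j}\le\beta_E+2j\mu$, and $\beta_{X_j}\le\beta_E+(2j-1)\mu+\beta_{\mathcal{P}}$; the last of these is precisely the assertion of the lemma, and the base case $j=0$ is immediate from $X_0=0$ and $R_0=P_0=E$.

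The inductive step is a short chain of applications of the two growth rules: from the hypothesis on $R_{j-1}$ and $P_{j-1}$ one gets $\beta_{\hat P_{j-1}}\le\beta_E+2(j-1)\mu+\beta_{\mathcal{P}}$, hence $\beta_{V_{j-1}}\le\beta_E+(2j-1)\mu$, hence $\beta_{S_{j-1}}\le\beta_E+(2j-1)\mu$, hence $\beta_{\hat S_{j-1}}\le\beta_E+(2j-1)\mu+\beta_{\mathcal{P}}$ and $\beta_{T_{j-1}}\le\beta_E+2j\mu$; substituting into the update formulas for $X_j$, $R_j$, $P_j$, together with $\beta_{X_{j-1}}\le\beta_E+(2j-3)\mu+\beta_{\mathcal{P}}\le\beta_E+(2j-1)\mu+\beta_{\mathcal{P}}$ from the hypothesis, yields all three bounds at index $j$. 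I do not expect a genuine obstacle here; the only points requiring care are committing to a single variant of preconditioned Bi-CGSTAB and remaining consistent with it (a left- or symmetrically-preconditioned variant merely redistributes the $\beta_{\mathcal{M}}$ and $\beta_{\mathcal{P}}$ contributions but produces the same count), and carrying the auxiliary quantities $V_{j-1},S_{j-1},T_{j-1},\hat P_{j-1},\hat S_{j-1}$ through the induction so that the three-part hypothesis closes. It is worth remarking that the estimate is deliberately pessimistic — it ignores cancellation and the detailed fill pattern — but it is exactly what is needed to guarantee that sparsity is lost only linearly in $j$.
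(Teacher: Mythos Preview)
Your proposal is correct and is precisely the ``straightforward adaptation and generalization'' the paper alludes to: the paper does not spell out the argument but merely points to \cite[Proposition~2.4 and Proposition~4.1]{palitta2018numerical}, and your line-by-line bandwidth bookkeeping through the right-preconditioned Bi-CGSTAB recursion, closed by the three-part induction on $(\beta_{R_j},\beta_{P_j},\beta_{X_j})$, is exactly how those propositions are proved. The induction step checks out as written.
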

\begin{proof}
The proof is a straightforward adaptation and generalization of \cite[Proposition 2.4 and Proposition 4.1]{palitta2018numerical}.    
\end{proof}

\section{Numerical experiments}
\label{se: experiments}
We now test our preconditioning strategies on a few benchmark problems. All algorithms\footnote{The algorithms and code for reproducing the experiments are freely available at: \url{https://github.com/YannisVoet/Sylvester/tree/main}} are implemented in MATLAB R2023a and run on MacOS with an M1 chip and 32 GB of RAM. The experiments are meant to test our preconditioning techniques for a variety of problems. In the sequel, Kronecker rank $q$ preconditioners given by the nearest Kronecker product and (sparse) Kronecker product approximations of the inverse are referred to as NKP($q$) and KINV($q$), respectively.

\subsection{RC circuit simulation}
Our next example stems from a second order Carleman bilinearization of a nonlinear control system encountered for RC circuit simulations \cite{bai2006projection}. It is a prototypical example of a \emph{Lyapunov-plus-positive} equation and has become over the years a classical benchmark for low-rank solvers \cite{benner2013low, shank2016efficient, kressner2015truncated}. It reads
\begin{equation}
\label{eq: circuit_model}
     AX + XA^T + NXN^T = E
\end{equation}
where $A$, $N$ and $E$ are of size $n=n_0^2+n_0$ and are given by
\begin{equation*}
    A=
    \begin{pmatrix}
        A_1 & A_2 \\
        0 & A_1 \otimes I + I \otimes A_1
    \end{pmatrix},
    \quad
    N=
    \begin{pmatrix}
      0 & 0 \\
      \mathbf{b} \otimes I + I \otimes \mathbf{b} & 0
    \end{pmatrix},
    \quad
    E=-
    \begin{pmatrix}
        \mathbf{b}\\
        0
    \end{pmatrix}
    \begin{pmatrix}
        \mathbf{b}^T & 0
    \end{pmatrix}
\end{equation*}
where $A_1 \in \mathbb{R}^{n_0 \times n_0}, A_2 \in \mathbb{R}^{n_0 \times n_0^2}$ and $\mathbf{b} \in \mathbb{R}^{n_0}$. We refer to \cite{bai2006projection} for the explicit construction of the various blocks. Since we do not exploit any low-rank structure of the solution matrix, we restrict our experiments to small or medium size versions of the equation. Common preconditioning strategies for this problem are based on the Lyapunov part of the equation; i.e. $AX + XA^T$. Although it is usually replaced by a few steps of ADI for computational efficiency, we use it as such in our experiments. For solving \cref{eq: circuit_model}, we set $n_0=30$ ($n=930$) and test our algebraic preconditioners with a restarted GMRES method (with a relative tolerance of $10^{-8}$ and restarted every $50$ iterations). The sparsity pattern for the factor matrices of the approximate inverse is defined based on small powers ($2$ or $4$) of the factor matrices of the operator. The convergence history is shown in \cref{fig: exp4_circuit_n0_30_gmres} for the first $100$ iterations and timings are reported in \cref{tab: timings_circuit}. Interestingly, we notice that the Lyapunov preconditioner and the NKP(2) preconditioner give exactly the same results. As a matter of fact, the two preconditioners are exactly the same, which is a consequence of the orthogonality $\langle A,N \rangle_F=\langle I,N \rangle_F=0$. Indeed, going through the steps of \cref{algo: NKP_preconditioner}, it turns out that $V_Y=V_AW_A$ and $V_Z=V_BW_B$, where $W_A:=R_A^{-1}\tilde{U}\Sigma^{1/2}$ and $W_B:=R_B^{-1}\tilde{V}\Sigma^{1/2}$ have the following form
\begin{equation*}
    \begin{pmatrix}
        * & * & 0 \\
        * & * & 0 \\
        0 & 0 & 1
    \end{pmatrix}
\end{equation*}
where $*$ denotes a nonzero entry. It follows that the coefficient matrices for the NKP(2) preconditioner are merely linear combinations of $A$ and $I$ and do not involve $N$. Thus, $P=AX + XA^T$ is equal to the Lyapunov part, which better explains the success of this preconditioner in \cite{benner2013low}. This observation extends to other settings in \cite{benner2013low, damm2008direct, shank2016efficient}, where the coefficient matrices of the Lyapunov part are ``weakly correlated'' to the additional terms, although non-orthogonal. From a timewise perspective, preconditioners based on solving small-size Lyapunov equations directly are too expensive, despite the small iteration count and problem size. In all our experiments, we have used an in-house implementation of the advanced block splitting strategy of \cite{jonsson2002blocked} coupled with the method in \cite{gardiner1992solution} for solving small-size Sylvester equations. Although replacing it with a few steps of ADI is a natural alternative for this problem, the choice of inner solver is generally not so obvious and such investigations fall outside the scope of this article. Although the KINV preconditioners are much less effective in terms of iteration count, their low application cost makes up the difference and their setup leverages MATLAB's parallel computing capabilities.

\begin{figure}[htbp]
    \centering
    \includegraphics[scale=0.40]{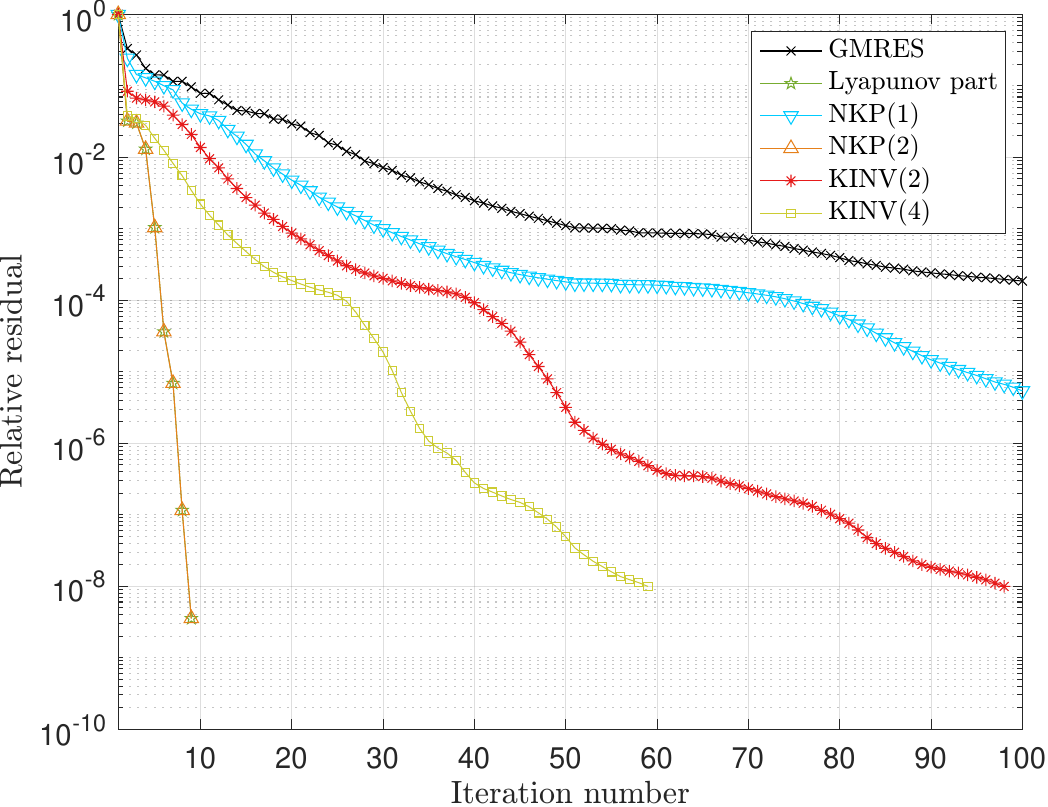}
    \caption{Convergence history for solving \cref{eq: circuit_model} using the (right-preconditioned) GMRES method. The non-preconditioned method converged after $630$ iterations.}
    \label{fig: exp4_circuit_n0_30_gmres}
\end{figure}

\begin{table}[htbp]
    \centering
    \begin{tabular}{|l|l l|}
        \hline
         Preconditioner & Setup & GMRES \\
         \hline
         None & $-$ & 26.0 (630) \\
         Lyapunov & $-$/6.0 & 12.2 (8) \\
         NKP(1) & 0.06/0.02 & 15.0 (203)\\
         NKP(2) & 0.06/5.9 & 12.2 (8)\\
         KINV(2) & 1.4 & 5.8 (97)\\
         KINV(4) & 2.4 & 5.2 (58)\\
         \hline
    \end{tabular}
    \caption{Timing (in seconds). When writing $x/y$, $x$ represents the time for computing the SVD representation of the operator (with \cref{algo: NKP_preconditioner}) and $y$ is the time for computing matrix factorizations (e.g. QZ or LU). The total number of iterations is shown in parenthesis.}
    \label{tab: timings_circuit}
\end{table}

\subsection{Isogeometric analysis}
Our next experiment arises from applications in isogeometric analysis, a tensorized finite element method \cite{hughes2005isogeometric, cottrell2009isogeometric}. It is well-know that discretizations of the Laplacian on simple 2D geometries with separable coefficients lead to a Kronecker rank 2 stiffness matrix \cite{sangalli2016isogeometric}. Unfortunately, this pleasant structure only holds for idealized problems. For nontrivial (single-patch) geometries, the stiffness matrix is nevertheless well approximated by low Kronecker rank matrices; a property at the heart of several fast assembly algorithms \cite{mantzaflaris2017low, scholz2018partial, hofreither2018black}. The same holds true for the mass matrix. In this experiment, we consider a quarter of a plate with a hole commonly used for benchmarking purposes in isogeometric analysis (see e.g. \cite[Figure 16]{hughes2005isogeometric}). For this experiment, we have used GeoPDEs \cite{vazquez2016new}, a MATLAB-Octave software package for isogeometric analysis. The geometry is discretized with cubic splines and $200$ subdivisions in each spatial direction. For this problem, the mass matrix is approximated up to machine precision by a Kronecker rank 10 matrix independently of the discretization parameters, such that
\begin{equation}
\label{eq: mass_operator}
    \mathcal{M}(X)=\sum_{k=1}^{10} B_kXA_k^T. 
\end{equation}
where all the $A_k$ and $B_k$ for $k=1,\dots,10$ are banded matrices of size $n=201$ and $m=403$ with small bandwidth. Although suggested in \cite{scholz2018partial}, few attempts have successfully exploited this structure and instead practitioners often explicitly form the system matrices. Nevertheless, several very efficient preconditioning strategies have been developed \cite{gao2013kronecker, gao2014fast, loli2021easy}. To the best of our knowledge, the preconditioner of Loli et al. \cite{loli2021easy} is the state-of-the-art preconditioner for the isogeometric mass matrix and provides a good comparison for our methods. Although originally formulated for the explicitly assembled mass matrix, the preconditioner may be recast as a linear operator
\begin{equation*}
    \mathcal{P}(X)=S \ast (P_2^{-1}(S \ast X)P_1^{-1})
\end{equation*}
where $P_1,P_2$ are banded matrices, $S$ is a dense low-rank matrix and $\ast$ denotes the Hadamard (elementwise) product. We refer to the original article \cite{loli2021easy} for its explicit construction. We compare the performance of this preconditioner against the algebraic NKP($q$) and KINV($q$) preconditioners for $q=1,2$. Small powers (e.g. $3$ or $4$) of the coefficient matrices define the sparsity pattern of the coefficients for the approximate inverse. Although the mass matrix is symmetric positive definite, not all preconditioners are. We use the Bi-CGSTAB method for all cases to provide a valid comparison. The right-hand side is a rectangular matrix consisting of the identity in its upper block and all zeros in its lower block. We choose $X_0=0$ as starting matrix, set a tolerance of $10^{-8}$ on the relative residual and gap the number of iterations at $100$. Figure \ref{fig: exp2_iga_plate_with_hole_n200_p3_bicgstab} shows the convergence history for the first $50$ iterations. All our preconditioning techniques compare favorably with the preconditioner of Loli et al. in terms of iteration count. The NKP(1) and KINV(1) preconditioners behave similarly while the KINV(2) preconditioner is slightly better than NKP(2). Computing times were all within one second, except in the non-preconditioned case. The KINV(2) preconditioner was the fastest within Bi-CGSTAB but did not outperform the preconditioner of Loli et al. when also considering its setup cost. However, contrary to the preconditioner of Loli et al., the KINV preconditioner maintains a certain level of sparsity throughout the iterations and we could verify that the upper bound of \cref{lem: sparsity_bicgstab} was attained. This becomes a major advantage of our method for large scale problems. Although the NKP(2) preconditioner leads to small iteration counts, it was the most expensive of all. The cost of repeatedly solving standard Sylvester equations explains the difference, which we clearly noticed when comparing the Kronecker rank $1$ and $2$ versions of the preconditioner. The prohibitive cost of repeatedly solving such equations was already stressed in \cite{damm2008direct} and alternative methods should be considered. However, it pertains to standard Sylvester equations and falls outside the scope of this contribution. 

\begin{figure}[htbp]
    \centering
    \includegraphics[scale=0.40]{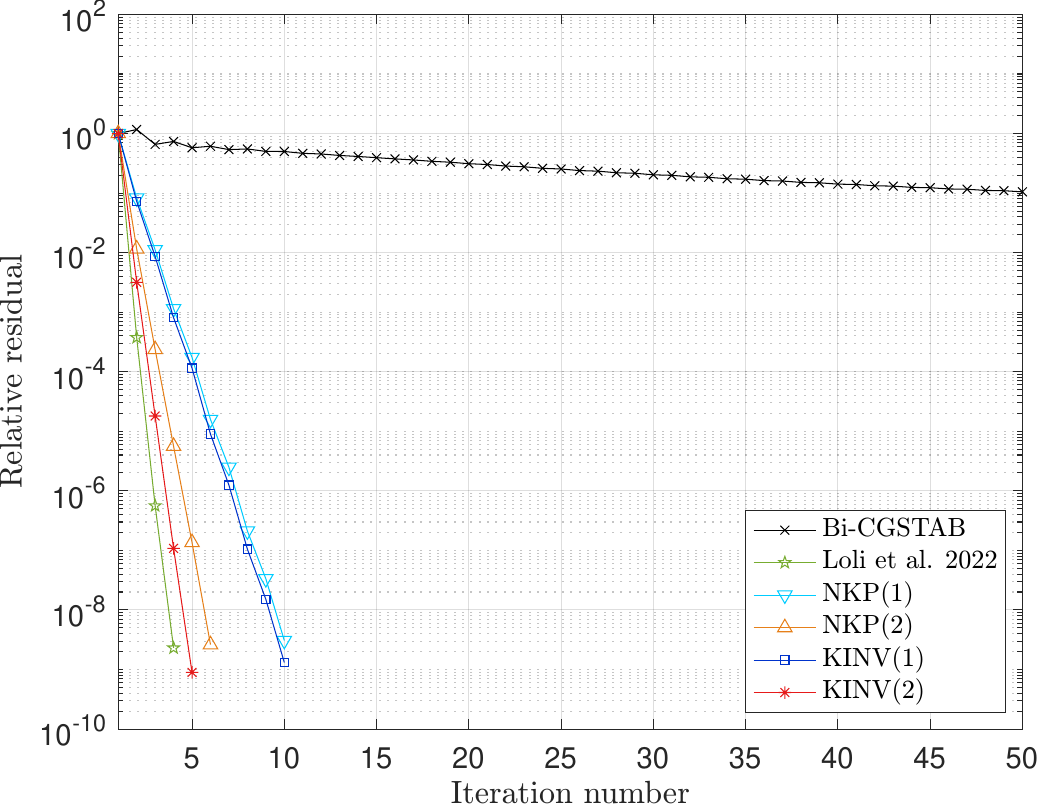}
    \caption{Convergence history for solving \cref{eq: mass_operator} using the (right-preconditioned) Bi-CGSTAB method}
    \label{fig: exp2_iga_plate_with_hole_n200_p3_bicgstab}
\end{figure}

\subsection{Convection-diffusion equation}
We now consider the finite difference discretization of the convection-diffusion equation
\begin{equation*}
    -\epsilon \Delta u + \mathbf{w} \cdot \nabla u = f
\end{equation*}
on the unit square $\Omega=(0,1)^2$ with a convection vector whose components are separable functions (i.e. $w_k(x,y)=\phi_k(x)\psi_k(y)$, $k=1,2$). Under these assumptions, Palitta and Simoncini \cite[Proposition 1]{palitta2016matrix} showed that the discrete solution on a finite difference grid $\{x_i\}_{i=1}^n \times \{y_j\}_{j=1}^n$ satisfies the matrix equation
\begin{equation}
\label{eq: conv_diff_eq}
    TX + XT^T + (\Phi_1 B)X \Psi_1 + \Phi_2 X(B^T\Psi_2) = F
\end{equation}
where $\Phi_k = \diag(\phi_k(x_1),\dots,\phi_k(x_n))$, $\Psi_k=\diag(\psi_k(y_1),\dots,\psi_k(y_n))$ for $k=1,2$ and $B$ and $T$ stem from centered finite difference discretizations of the first and second order derivatives (with diffusion coefficient), respectively, and $F$ accounts for the right-hand side and boundary conditions; see \cite{palitta2016matrix} for the details. Our experiment closely follows Example 4 in \cite{palitta2016matrix}. We set $f=0$,
\begin{equation*}
    \mathbf{w}=
    \begin{pmatrix}
        y(1-(2x+1)^2) \\
        -2(2x+1)(1-y^2)
    \end{pmatrix}
\end{equation*}
and prescribe homogeneous Dirichlet boundary conditions on the entire boundary, except for $y=0$, where
\begin{equation*}
    u(x,0)=
    \begin{cases}
        1 + \tanh(10+20(2x-1)) & 0 \leq x \leq 0.5, \\
        2 & 0.5 < x \leq 1.
    \end{cases}
\end{equation*}
These boundary conditions are built in the right-hand side $F$ following the procedure described in \cite[Section 3]{palitta2016matrix}. Preconditioning \cref{eq: conv_diff_eq} becomes challenging for convection dominated problems (for $\|\mathbf{w}\| \gg \epsilon$), where preconditioners based on the Lyapunov part are ineffective. For this type of problem, the authors constructed a Kronecker rank $2$ preconditioner given by
\begin{equation*}
    \mathcal{P}(X) := (T+\bar{\psi}_1\Phi_1 B)X + X(T+\bar{\phi}_2 \Psi_2B)^T
\end{equation*}
where $\bar{\psi}_1$ and $\bar{\phi}_2$ are the mean of $\{\psi_1(x_i)\}_{i=1}^n$ and $\{\phi_2(y_j)\}_{j=1}^n$, respectively. We solve \cref{eq: conv_diff_eq} for $n=1000$ using GMRES with a relative tolerance of $10^{-6}$ and a maximum number of $200$ iterations. In \cite{palitta2016matrix}, one-sided standard Sylvester equations with the preconditioning operator are solved using with a projection technique based on an extended Krylov subspace method (KPIK) \cite{simoncini2007new}. However, the NKP(2) preconditioning operator is generally two-sided. In order to provide a fair comparison, small-size Sylvester equations are solved in both cases using our in-house implementation of the block recursive splitting algorithm described in \cite{jonsson2002blocked}. Sparse KINV preconditioners are constructed from the sparsity pattern of $(|A|^T|A|)^p$ where $A$ is the sum of all right-sided (or left-sided) coefficient matrices and $p \leq 19$. Their setup was accelerated using MATLAB's parallel computing toolbox. The convergence history is shown in \cref{fig: exp7_convection_diffusion_gmres_rhs0} for the largest and smallest diffusion coefficients. For $\epsilon=1/10$, the NKP(2) preconditioner is nearly as good as the one of Palitta and Simoncini but shows early signs of instabilities for $\epsilon=1/30$. Actually, both operators become increasingly ill-conditioned as $\epsilon \to 0$ and eventually break down. The KINV preconditioners were far more robust and actually improve as $\epsilon \to 0$. Timings for GMRES are reported in \cref{tab: timings_convection_diffusion} for various diffusion coefficients. The timings for the NKP(2) preconditioner and the one of Palitta and Simoncini are rather pessimistic due to our choice of inner solver. It could essentially be replaced with any state-of-the-art method known for this specific problem. Much better timings are reported in \cite[Table 4]{palitta2016matrix} with KPIK and compete with our KINV preconditioner for a problem of comparable size.

\begin{figure}[htbp]
     \centering
     \begin{subfigure}[t]{0.48\textwidth}
    \centering
    \includegraphics[width=\textwidth]{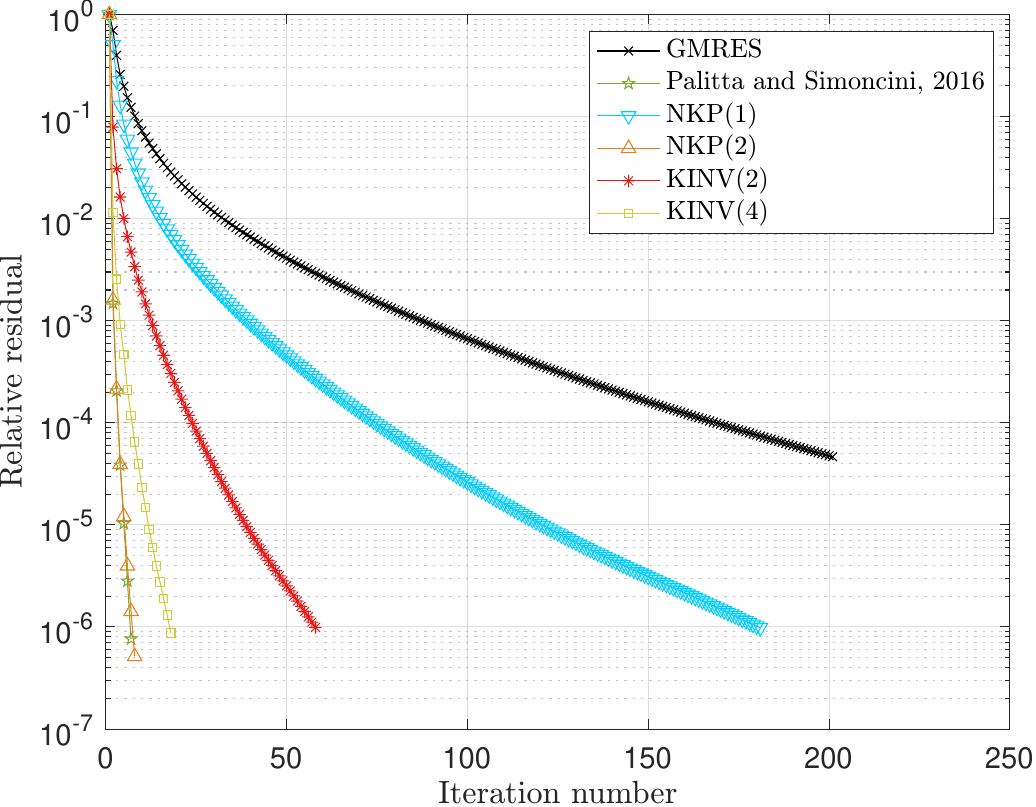}
    \caption{$\epsilon=1/10$}
    \label{fig: exp7_convection_diffusion_gmres_rhs0_eps_0_1}
     \end{subfigure}
     \hfill
     \begin{subfigure}[t]{0.48\textwidth}
    \centering
    \includegraphics[width=\textwidth]{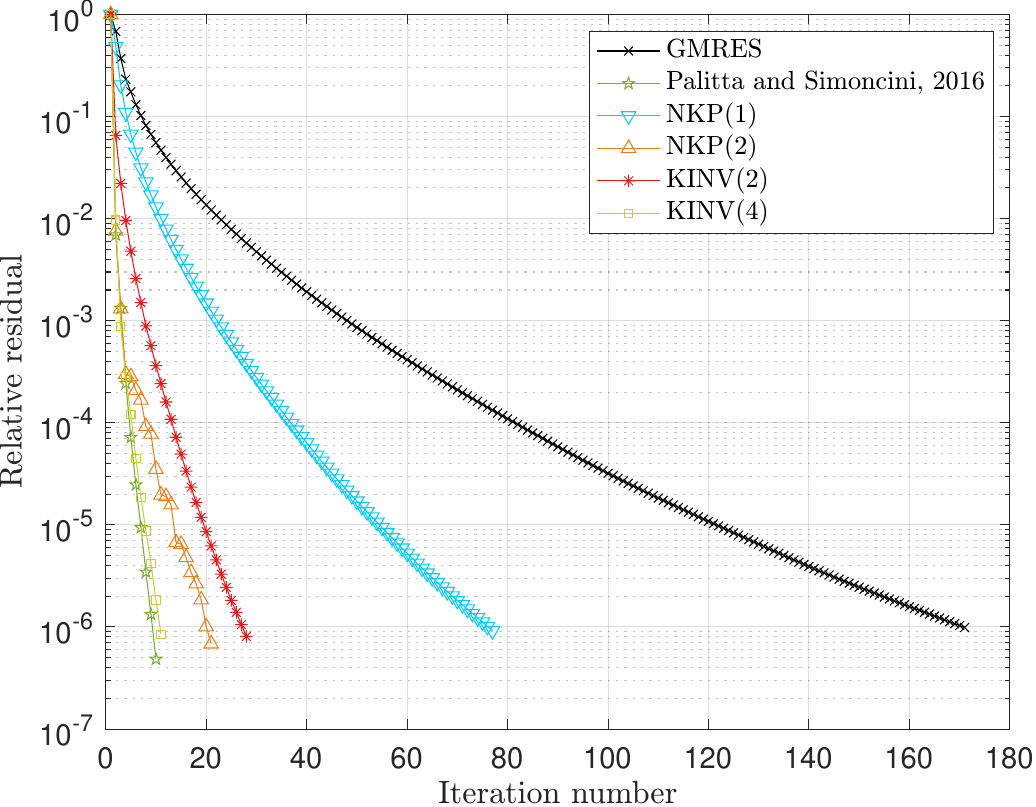}
    \caption{$\epsilon=1/30$}
    \label{fig: exp7_convection_diffusion_gmres_rhs0_eps_0_033}
     \end{subfigure}
     \hfill
    \caption{Convergence history for solving \cref{eq: conv_diff_eq} using the (right-preconditioned) GMRES method}
    \label{fig: exp7_convection_diffusion_gmres_rhs0}
\end{figure}

\begin{table}[htbp]
    \centering
    \begin{tabular}{|l|l l l l|}
        \hline
         Preconditioner & Setup & $\epsilon=1/10$ &  $\epsilon=1/20$ & $\epsilon=1/30$ \\
         \hline
         None & $-$ & 103.9 (200$^*$) & 105.1 (200$^*$) & 73.25 (170) \\
         Palitta and Simoncini & $-$/10.3 & 10.8 (6) & 14.2 (8) & 24.4 (9) \\
         NKP(1) & 0.04/0.01 & 94.9 (180) & 28.9 (104) & 13.8 (76) \\
         NKP(2) & 0.04/8.95 & 13.4 (7) & 20.7 (12) & 51.1 (20) \\
         KINV(2) & 1.04 & 9.38 (57) & 4.41 (35) & 2.95 (27) \\
         KINV(4) & 1.86 & 2.04 (17) & 1.40 (12) & 1.17 (10) \\
         \hline
    \end{tabular}
    \caption{Timing (in seconds). When writing $x/y$, $x$ represents the time for computing the SVD representation of the operator (with \cref{algo: NKP_preconditioner}) and $y$ is the time for computing matrix factorizations (e.g. QZ or LU). The total number of iterations is shown in parenthesis, where $*$ indicates that the method did not converge within the maximum number of iterations.}
    \label{tab: timings_convection_diffusion}
\end{table}

\section{Conclusion}
\label{se: conclusion}
In this article, we have proposed general algebraic parameter-free preconditioning techniques for the iterative solution of generalized multiterm Sylvester matrix equations. Our strategies rely on low Kronecker rank approximations of either the operator or its inverse. While the former requires solving standard Sylvester equations at each iteration, the latter only requires matrix-matrix products and provides an inexpensive alternative. Moreover, we have shown how sparse approximate inverse techniques could be combined with low Kronecker rank approximations, thereby speeding up the application of the preconditioning operator and potentially preserving some sparsity in the iterates. Such approximations may also be valuable for other applications seeking data-sparse representations of the inverse.

Numerical experiments revealed the robustness and effectiveness of our techniques for preconditioning multiterm matrix equations arising in a variety of disciplines, including control systems, isogeometric analysis and finite difference discretizations of convection-dominated problems. For all three experiments, the nearest Kronecker product preconditioner favorably competes with state-of-the-art preconditioners tailored to those specific applications. Nevertheless, the preconditioner must be supplemented with efficient solvers for standard Sylvester equations, which must be assessed on a case by case basis. In contrast, sparse low Kronecker rank approximations of the inverse only require a suitable sparsity pattern and may provide cheap alternatives, despite the often larger iteration count.

\section*{Acknowledgments}
I thank Daniel Kressner for valuable discussions and pointers to relevant literature and Espen Sande for carefully reading through this manuscript.

\end{document}